%
%

\documentclass[a4paper]{amsart}
\usepackage{amssymb}
\usepackage{xypic}
\usepackage{graphicx}

\newtheorem{theorem}{Theorem}[section]
\newtheorem{lemma}[theorem]{Lemma}

\newtheorem{cor}[theorem]{Corollary}

\theoremstyle{definition}
\newtheorem{definition}[theorem]{Definition}

\theoremstyle{remark}

\numberwithin{equation}{section}
\newcommand{\Q}{\mathbb{Q}}
\newcommand{\C}{\mathbb{C}}

\newcommand{\R}{\mathbb{R}}

\DeclareMathOperator{\rank}{rank}

\DeclareMathOperator{\ind}{ind}
\DeclareMathOperator{\coker}{coker}
\DeclareMathOperator{\ch}{ch}

\title{A note on torus actions and the Witten genus}


\author{Michael Wiemeler}
\address{Institut f\"ur Mathematik\\ Universit\"at Augsburg\\D-86135 Augsburg\\Germany}
\email{michael.wiemeler@math.uni-augsburg.de}
\thanks{Part of the research for this article was supported by DFG Grant HA 3160/6-1.}


\subjclass[2010]{57S15, 58J26}

\keywords{torus actions, Witten genus, rigidity of torus manifolds}



\begin{document}
\begin{abstract}
  We show that the Witten genus of a string manifold \(M\) vanishes, if there is an effective action of a torus \(T\) on \(M\) such that \(\dim T>b_2(M)\).
  We apply this result to study group actions on \(M\times G/T\), where \(G\) is a compact connected Lie group and \(T\) a maximal torus of \(G\).
  
  Moreover, we use the methods which are needed to prove these results to the study of torus manifolds.
  We show that up to diffeomorphism there are only finitely many quasitoric manifolds \(M\) with the same cohomology ring as \(\#_{i=1}^k \pm\C P^n\) with \(k<n\).
\end{abstract}

\maketitle


\section{Introduction}

In this note we prove a vanishing result for the Witten genus of a string manifold on which a high dimensional torus acts effectively.
Concerning the Witten genus of string manifolds on which a compact connected Lie group acts the following is known:
\begin{itemize}
\item It has been shown by Liu \cite[discussion after Theorem 4, p.370]{MR1331972} that the Witten genus of a string manifold \(M\)  with \(b_2(M)=0\) vanishes if there is a non-trivial action of \(S^1\) on \(M\).
\item Dessai \cite{MR1731460}  showed that the Witten genus of a string manifold \(M\) vanishes if there is an almost effective action of \(SU(2)\) on \(M\).
\end{itemize}

Moreover we \cite{MR3031643} showed the following stabilizing result:
If there is an effective action of a semi-simple compact connected Lie
group \(G\) with \(\rank G>\rank H\) on \(M\times H/T\), where \(H\) is a semi-simple compact connected Lie group with maximal torus \(T\), then the Witten genus of \(M\) vanishes.

In this note we generalize the first statement in the following way:

\begin{theorem}[Theorem \ref{sec:torus-actions-witten}]
\label{sec:introduction-2}
  Let \(M\) be a \(\text{Spin}\)-manifold such that \(p_1(M)\) is
  torsion.
  If there is an almost effective action of a torus \(T\) with \(\rank
  T > b_2(M)\) on \(M\) then the Witten genus of \(M\) vanishes.
\end{theorem}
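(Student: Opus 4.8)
The plan is to single out one circle subgroup $S^1\subseteq T$ for which Liu's rigidity method forces the Witten genus to vanish; the inequality $\rank T>b_2(M)$ is precisely what makes such a circle available. Work with rational coefficients, so that $p_1(M)=0$ in $H^4(M;\Q)$, write $\mathfrak t=\mathrm{Lie}(T)$, and consider the Borel fibration $M\to M_T\to BT$ together with $p_1^T(M)\in H^4_T(M;\Q)$, whose restriction to a fibre $M$ is $p_1(M)=0$. Denote by $W(M)(\tau)$ the Witten genus of $M$.

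\emph{Producing the circle.} Since $H^{\mathrm{odd}}(BT;\Q)=0$, the Serre spectral sequence of the Borel fibration shows that, $p_1(M)$ being rationally trivial, the only obstruction to $p_1^T(M)$ lying in the image of $H^4(BT;\Q)\to H^4_T(M;\Q)$ is a ``mixed part'' $\mu$ living in a subquotient of $H^2(M;\Q)\otimes H^2(BT;\Q)$. Choose a lift $\tilde\mu\colon\mathfrak t\to H^2(M;\Q)$ of $\mu$; since $\dim\mathfrak t=\rank T>b_2(M)=\dim H^2(M;\Q)$, the kernel of $\tilde\mu$ contains a rational line, hence a circle subgroup $S^1\subseteq T$ with $\mathrm{Lie}(S^1)\subseteq\kernel\tilde\mu$. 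For this $S^1$ the mixed part of $p_1^{S^1}(M)$ vanishes, so $p_1^{S^1}(M)=\pi^*(c\,u^2)$ for some $c\in\Q$ (with $u$ a generator of $H^2(BS^1)$ and $\pi\colon M_{S^1}\to BS^1$); and $S^1$ acts nontrivially on $M$ because $T$ acts almost effectively.

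\emph{Vanishing.} If $M^{S^1}=\emptyset$, then (as $M$ is Spin) every $S^1$-equivariant twisted Dirac index localises to the empty fixed-point set and hence vanishes; in particular all coefficients $\hat A(M,R_n)$ of the Witten genus vanish, where $\bigoplus_nR_nq^n=\bigotimes_{k\ge1}S_{q^k}(\tilde T_\C M)$, so $W(M)=0$. If $M^{S^1}\neq\emptyset$, then (the action being nontrivial and $M$ connected) $M^{S^1}$ has a component $F$ of positive codimension; for $p\in F$ the normal space $\nu_p$ is nonzero with nonzero $S^1$-weights $m_1,\dots,m_\ell$, and restricting $p_1^{S^1}(M)=\pi^*(c\,u^2)$ to $p$ — where restriction to $p$ splits $\pi^*$ — gives $c=\sum_jm_j^2>0$. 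Now Liu's machinery applies: because $M$ is Spin with $p_1(M)$ torsion and the mixed part of $p_1^{S^1}(M)$ vanishes, the rigidity of the Witten genus gives that the $S^1$-equivariant Witten genus equals $W(M)(\tau)$ times a universal function of the equivariant variable $z$, while Liu's pole-cancellation/modular-invariance theorem gives that this product is a holomorphic Jacobi form of index proportional to $c$; since $c>0$, a short argument with Jacobi forms then forces $W(M)(\tau)=0$. In the remaining case $c=0$ only Liu's rigidity is obtained, which is exactly why the positive-codimension fixed component — equivalently $c>0$ — is the extra input supplied by the circle.

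The main obstacle is this last step of the second case: running Liu's argument (rigidity, then cancellation of the poles of the fixed-point formula and identification of the result as a holomorphic Jacobi form, then vanishing once the index is positive) under the hypotheses at hand, namely $p_1(M)$ merely torsion rather than the integral string condition $\tfrac12p_1(M)=0$, and only the mixed part of $p_1^{S^1}(M)$ — not all of it — vanishing; in particular one must check that Liu's $b_2(M)=0$ hypothesis was only ever used through the vanishing of this mixed part. A secondary, routine point is the spectral-sequence bookkeeping behind the choice of $S^1$: that the mixed part of $p_1^T(M)$ is governed, up to subquotients, by a homomorphism $\mathfrak t\to H^2(M;\Q)$, so that $\rank T>b_2(M)$ produces a kernel, and that passing to $S^1$ genuinely annihilates it.
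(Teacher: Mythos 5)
Your proposal follows essentially the same route as the paper: the Serre spectral sequence of the Borel fibration together with $\rank T > b_2(M)$ produces a circle $S^1\subset T$ for which $p_1^{S^1}(-TM)$ is pulled back from $BS^1$ (this is the paper's Lemma~\ref{sec:torus-acti-stab}), and evaluation at a fixed point shows the coefficient is negative, the empty-fixed-point case being handled by localization. The step you flag as the ``main obstacle'' is exactly what the paper quotes as a black box, namely Dessai's $\text{Spin}^c$ generalization of Liu's vanishing theorem (Theorem~\ref{sec:twist-dirac-oper-9}), which is stated modulo torsion and whose only hypothesis on the equivariant Pontrjagin class is the one you verify, so no further Jacobi-form argument is required.
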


The main new ingredient to prove this theorem is a spectral sequence argument for actions of tori \(T\) on manifolds \(M\) with \(b_2(M)<\rank T\) (see Lemma~\ref{sec:torus-acti-stab}).

If \(b_1(M)=0\), this theorem allows the following generalization, which
is also a generalization of the third statement form above.

\begin{theorem}[Theorem \ref{dirac:dis:sec:two-vanish-results}]
\label{sec:introduction-3}
  Let \(M\) be a \(\text{Spin}\)-manifold such that \(p_1(M)\) is torsion and \(b_1(M)=0\).
  Moreover, let \(M'\) be a \(2n\)-dimensional
  \(\text{Spin}^c\)-manifold, \(n>0\), with \(b_1(M')=0\) such that there are \(x_1,\dots,x_n\in H^2(M';\mathbb{Z})\) with
  \begin{enumerate}
  \item \(\sum_{i=1}^n x_i=c_1^c(M')\) modulo torsion,
  \item \(\sum_{i=1}^n x_i^2=p_1(M')\) modulo torsion,
  \item\label{dirac:dis:item:6} \(\langle \prod_{i=1}^n x_i, [M']\rangle \neq 0\).
  \end{enumerate}
If there is an almost effective action of a torus \(T\) on \(M\times M'\) such that \(\rank T > b_2(M\times M')\), then the Witten-genus of \(M\) vanishes.
Here \(c_1^c(M')\) denotes the first Chern class of the line bundle
associated to the \(\text{Spin}^c\)-structure on \(M'\).
\end{theorem}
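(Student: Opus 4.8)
We reduce the statement to the mechanism behind Theorem~\ref{sec:torus-actions-witten}, applied not to $M$ but to the product $N:=M\times M'$. Since $M$ is Spin and $M'$ is $\text{Spin}^c$, the manifold $N$ inherits a $\text{Spin}^c$-structure with $c_1^c(N)=c_1^c(M')$, and $b_1(N)=b_1(M)+b_1(M')=0$. Pick complex line bundles $L_1,\dots,L_n$ on $M'$ with $c_1(L_i)=x_i$ and set $E:=L_1\oplus\dots\oplus L_n$, viewed also as a real bundle on $N$. By hypotheses~(1) and~(2) one has $c_1(E)\equiv c_1^c(N)$ and $p_1(E)=\sum_i x_i^2\equiv p_1(M')=p_1(N)$ modulo torsion, the last step using that $p_1(M)$ is torsion. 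Hence the virtual bundle $V:=TN\ominus E$ has rank $\dim M$, vanishing $w_2$, and $p_1(V)$ torsion; that is, $V$ is rationally string. Since $b_1(N)=0$, the $T$-action lifts to $V$, and the lifts of the $L_i$ may be twisted by arbitrary characters of $T$.

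Attach to this data the \emph{twisted equivariant Witten genus} of $N$: the $T$-equivariant index of the $\text{Spin}^c$-Dirac operator of $N$ twisted by $\bigotimes_{k\ge 1}S_{q^k}(\widetilde V\otimes\C)$ and by the $q$-deformed $K$-theoretic Euler class of $E$ (following the constructions of \cite{MR1331972,MR1731460} and \cite{MR3031643}). Being built from the rationally string bundle $V$, this is a modular form in $\tau$ of weight $\tfrac12\dim M$; by multiplicativity of the underlying characteristic-class expression and the Künneth formula, its non-equivariant value equals $\varphi_W(M)\cdot\psi(M')$, where $\psi(M')$ is the analogous invariant of $M'$ associated with the rank-$0$ string bundle $TM'\ominus E$ together with the Euler class of $E$. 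A modular form of weight $0$ is constant, so $\psi(M')$ is a scalar, and inspection of its $q^0$-coefficient identifies it, up to a nonzero factor, with $\langle\prod_{i=1}^n x_i,[M']\rangle$, which is nonzero by hypothesis~(3). It therefore suffices to show that the twisted equivariant Witten genus of $N$ vanishes.

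For this we run the argument proving Theorem~\ref{sec:torus-actions-witten}. The class $p_1^T(V)\in H^4_T(N;\Q)$ maps to $p_1(V)=0$ in $H^4(N;\Q)$, so by the spectral-sequence analysis of Lemma~\ref{sec:torus-acti-stab} (this is where $\rank T>b_2(N)$ is used) it lies in the ideal generated by $H^{>0}(BT)$; moreover, after adjusting the lifts of the $L_i$, its component in $H^2(BT)\otimes H^2(N;\Q)$ can be brought into $H^2(BT)\otimes H^2(M;\Q)$, which has rank at most $b_2(M)\le b_2(N)<\rank T$. Choose a nontrivial circle $S^1\subseteq T$ annihilating that component; then $p_1^{S^1}(V)=c\,u^2$ for a scalar $c$, with $u$ a generator of $H^2(BS^1;\Q)$. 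The equivariant anomaly-cancellation conditions required for rigidity of the twisted genus are exactly hypotheses~(1),(2) in equivariant form, so the twisted $S^1$-equivariant Witten genus of $N$ is rigid, i.e.\ independent of the elliptic parameter $z$, while transforming as a Jacobi form of index $c$. Evaluating $p_1^{S^1}(V)$ at a fixed point $p=(p_M,p_{M'})\in N^{S^1}$—after choosing the lifts so that the equivariant Chern roots of $E$ at $p$ agree with the weights of $TM'$ at $p_{M'}$, using~(1),(2)—gives $c=p_1^{S^1}(TM)|_{p_M}$, a sum of squares of the rotation weights of $S^1$ on $T_{p_M}M$, which is positive as soon as $S^1$ acts nontrivially on $M$. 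A $z$-independent Jacobi form of nonzero index vanishes identically, so the twisted equivariant Witten genus of $N$ is $0$, whence $\varphi_W(M)=0$.

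The main obstacle is the final step: one must choose $S^1\subseteq T$ that simultaneously kills the relevant cross terms of $p_1^T(V)$ \emph{and} acts nontrivially on $M$, so that the index $c$ is nonzero. Each of these two tasks costs at most $b_2(M\times M')$ of the dimensions of $T$, and it is precisely the strict inequality $\rank T>b_2(M\times M')$, via Lemma~\ref{sec:torus-acti-stab}, that leaves enough room. The degenerate case—every admissible circle fixes $M$—has to be dealt with separately: there the torus acting effectively on $M$ either has rank larger than $b_2(M)$, so that Theorem~\ref{sec:torus-actions-witten} applies to $M$ directly, or else the complementary subtorus acts with sufficiently large rank on $M'$ to force the conclusion through the $M'$-factor, using~(1)--(3) again.
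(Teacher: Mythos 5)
Your overall skeleton matches the paper's: twist by line bundles $L_i$ with $c_1(L_i)=x_i$, use multiplicativity and hypothesis~(3) to reduce the vanishing of $\varphi^c(M;0,0)$ to the vanishing of a twisted index on $N=M\times M'$, invoke Lemma~\ref{sec:torus-acti-stab} to find a circle $\rho\colon S^1\to T$ with $\rho^*p_1^T(\bigoplus p'^*L_i - TN)=a x^2$, and finish with the Liu--Dessai theorem. But there is a genuine gap in the fixed-point evaluation of the coefficient $a$. You assert that "after choosing the lifts so that the equivariant Chern roots of $E$ at $p$ agree with the weights of $TM'$ at $p_{M'}$, using~(1),(2)" one gets $c=p_1^{S^1}(TM)|_{p_M}$. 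Conditions (1) and (2) are non-equivariant identities modulo torsion between sums of first and second powers; they give you no control over the individual equivariant Chern roots of the $L_i$ at a fixed point, so this matching cannot be arranged and the claimed cancellation against the $M'$-tangent weights is unjustified. The correct device (used in the paper) is the opposite normalization: since $b_1(N)=0$ the lifts of the $L_i$ can be chosen so that their weights at a fixed point $y\in N^T$ are \emph{zero}, whence $a=-\sum v_i^2$ with the $v_i$ running over \emph{all} weights of $T_y(M\times M')$. Then $a<0$ as soon as the circle acts non-trivially on $M\times M'$, which is automatic because the $T$-action is almost effective and $\rho$ is non-trivial. Your miscalculation manufactures a spurious requirement that $S^1$ act non-trivially on the $M$-factor, and the resulting "degenerate case" in your last paragraph is left unresolved by hand-waving (the claim that a complementary subtorus acting on $M'$ "forces the conclusion through the $M'$-factor" does not yield anything about the Witten genus of $M$). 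As written, the proof is therefore incomplete, even though the case you worry about never actually arises.

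A secondary problem is the modularity mechanism you invoke: "the twisted genus is rigid, i.e.\ independent of $z$, while transforming as a Jacobi form of index $c$," followed by "a $z$-independent Jacobi form of nonzero index vanishes." Rigidity is not available when the anomaly $p_1^{S^1}(\bigoplus p'^*L_i - TN)$ is a nonzero multiple of $x^2$; what Theorem~\ref{sec:twist-dirac-oper-9} actually provides is vanishing when that multiple is \emph{negative} (and with your sign convention $V=TN\ominus E$ your coefficient would come out positive, for which no vanishing statement holds). You should simply cite Theorem~\ref{sec:twist-dirac-oper-9} with $I=a<0$ rather than re-deriving a rigidity-plus-index argument that is not correct as stated.
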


To deduce Theorem~\ref{sec:introduction-2} from
Theorem~\ref{sec:introduction-3} in the case that \(b_1(M)=0\), let \(M'\) be \(S^2\) and \(x_1\) be
the Euler class of \(M'\). Then \(M'\) satisfies all the assumptions
from Theorem~\ref{sec:introduction-3}. Moreover there is an almost
effective action of \(T\times S^1\) on \(M\times M'\) which is induced
from the \(T\)-action on \(M\) and the \(S^1\)-action on \(M'\) given
by rotation.
Hence, the Witten genus of \(M\) vanishes, because
 \[\rank (T\times
S^1)=\rank T +1 >b_2(M) +1 =b_2(M\times M').\]

If \(H\) is a semi-simple compact connected Lie group with maximal
torus \(T'\), then the tangent bundle of \(H/T'\) splits as a sum of
complex line bundles and \(H/T'\) has positive Euler characteristic. Therefore \(H/T'\) satisfies the assumptions on
\(M'\) in the above theorem.
Hence, we get:

\begin{cor}[Corollary \ref{dirac:dis:sec:vanish-result-witt-1}]
\label{sec:introduction}
  Let \(M\) be a \(\text{Spin}\)-manifold with \(p_1(M)=0\) and
  \(b_1(M)=0\) and \(H\) a semi-simple compact connected Lie group
  with maximal torus \(T'\) and \(\dim H>0\).
  If there is an almost effective action of a torus \(T\) on \(M\times H/T'\) such that \(\rank T>\rank H + b_2(M)\),
  then the Witten-genus of \(M\) vanishes.
\end{cor}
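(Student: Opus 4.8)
The plan is to apply Theorem~\ref{sec:introduction-3} with \(M'=H/T'\); the whole task is to check that \(H/T'\) satisfies all the hypotheses placed on \(M'\) there, and this only requires classical facts about the flag manifold \(H/T'\).

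First I would record the relevant structure of \(H/T'\). Since \(H\) is compact, connected, semi-simple with maximal torus \(T'\) and \(\dim H>0\), the space \(H/T'\) is a closed manifold of even dimension \(\dim H-\rank H=2|\Phi^+|\), where \(\Phi^+\) is a chosen set of positive roots; write \(2n=\dim(H/T')\), so \(n=|\Phi^+|>0\). The isotropy representation splits the tangent bundle as a Whitney sum \(T(H/T')\cong\bigoplus_{\alpha\in\Phi^+}L_\alpha\) of \(H\)-equivariant complex line bundles \(L_\alpha=H\times_{T'}\mathbb{C}_\alpha\), so \(H/T'\) carries an invariant almost complex structure and hence a \(\text{Spin}^c\)-structure with \(c_1^c(H/T')=c_1\bigl(T(H/T')\bigr)\). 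Finally, \(T'\hookrightarrow H\) induces a surjection on \(\pi_1\) (equivalently \(H/T'\) has a CW structure with cells only in even degrees), so \(H/T'\) is simply connected; in particular \(b_1(H/T')=0\).

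Next I would verify conditions (1)--(3) of Theorem~\ref{sec:introduction-3} for \(x_\alpha:=c_1(L_\alpha)\in H^2(H/T';\mathbb{Z})\), \(\alpha\in\Phi^+\). Condition (1) is immediate: \(\sum_\alpha x_\alpha=c_1\bigl(T(H/T')\bigr)=c_1^c(H/T')\). For condition (2) I would use \(p_1(E)=-c_2(E\otimes_{\mathbb{R}}\mathbb{C})\) with \(E\otimes_{\mathbb{R}}\mathbb{C}\cong\bigoplus_\alpha\bigl(L_\alpha\oplus\overline{L_\alpha}\bigr)\), whose total Chern class is \(\prod_\alpha(1-x_\alpha^2)\); the degree-four part gives \(p_1(H/T')=\sum_\alpha x_\alpha^2\). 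For condition (3), \(\prod_\alpha x_\alpha=c_n\bigl(T(H/T')\bigr)\) is the Euler class of \(H/T'\), so \(\langle\prod_\alpha x_\alpha,[H/T']\rangle=\chi(H/T')=|W(H)|>0\), where \(W(H)\) is the Weyl group. (Here (1) and (2) in fact hold exactly, not only modulo torsion.)

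It remains to match the rank inequality. By the Künneth formula together with \(b_1(M)=b_1(H/T')=0\) we have \(b_2(M\times H/T')=b_2(M)+b_2(H/T')\), while the Serre spectral sequence of the bundle \(H/T'\to BT'\to BH\) --- using \(H^1(BH;\mathbb{Q})=H^2(BH;\mathbb{Q})=H^3(BH;\mathbb{Q})=0\) for semi-simple \(H\) --- identifies \(H^2(H/T';\mathbb{Q})\) with \(H^2(BT';\mathbb{Q})\), so \(b_2(H/T')=\rank H\). Hence the assumption \(\rank T>\rank H+b_2(M)\) is exactly \(\rank T>b_2(M\times H/T')\). As \(M\) is \(\text{Spin}\) with \(p_1(M)=0\) (so torsion) and \(b_1(M)=0\), Theorem~\ref{sec:introduction-3} applies to \(M\) and \(M'=H/T'\) and shows that the Witten genus of \(M\) vanishes. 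I do not anticipate a genuine obstacle: the only steps needing a little care are the torsion bookkeeping in (1)--(2), which causes no trouble, and recalling the classical values \(\chi(H/T')=|W(H)|\) and \(b_2(H/T')=\rank H\).
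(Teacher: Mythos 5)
Your proposal is correct and takes essentially the same route as the paper: the paper likewise deduces the corollary by applying Theorem~\ref{dirac:dis:sec:two-vanish-results} to \(M'=H/T'\), citing the splitting of \(T(H/T')\) into complex line bundles and the positivity of \(\chi(H/T')\). You merely spell out in more detail the verifications (conditions (1)--(3), \(b_1(H/T')=0\), and \(b_2(H/T')=\rank H\) so that the rank hypotheses match) that the paper leaves implicit, and all of these checks are accurate.
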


A torus manifold is a \(2n\)-dimensional orientable manifold \(M\)  with an effective action of an \(n\)-dimensional torus \(T\) such that \(M^T\neq \emptyset\)
. 
A torus manifold \(M\) is called locally standard, if each orbit in \(M\) has an invariant neighborhood which is weakly equivariantly diffeomorphic to an open invariant subset of \(\C^n\).
Here \(\C^n\) is equipped with the action of \(T=(S^1)^n\) given by componentwise multiplication.
If this condition is satisfied the orbit space of \(M\) is naturally a manifold with corners.

A quasitoric manifold is a locally standard torus manifold whose orbit space \(M/T\) is face-preserving homeomorphic to a simple convex polytope \(P\).
Quasitoric manifolds were introduced by Davis and Januszkiewicz \cite{MR1104531}.
Torus manifolds were introduced by Masuda \cite{MR1689995} and Masuda and Hattori \cite{MR1955796} .

By combining our results with results of Dessai \cite{MR1722036}, \cite{MR1731460}  and a recent result of the author \cite{wiemeler15:_equiv} on the rigidity of certain torus manifolds, we also get the following finiteness result for simply connected torus manifolds:

\begin{theorem}[Theorem \ref{sec:appl-torus-manif}]
\label{sec:introduction-1}
  Up to homeomorphism (diffeomorphism, respectively) there are only finitely many simply connected torus manifolds \(M\) (quasitoric manifolds, respectively) with \(H^*(M;\mathbb{Z})\cong H^*(\#_{i=1}^k \pm\mathbb{C} P^n;\mathbb{Z})\) with \(k<n\).
\end{theorem}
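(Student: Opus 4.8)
The plan is to reduce the classification of such an $M$ to a bounded list of invariants and to show that each invariant takes only finitely many values. Throughout set $\dim M=2n$, so that $\rank T=n$ and the hypothesis $k<n$ is exactly the inequality $b_2(M)<\rank T$; in particular the $T$-action on $M$ lies in the range treated by the spectral-sequence argument behind Lemma~\ref{sec:torus-acti-stab}. Since $H^*(M;\Z)\cong H^*(\#_{i=1}^k\pm\mathbb{C}P^n;\Z)$ is concentrated in even degrees, $M$ is locally standard and its orbit space is a nice manifold with corners whose face poset is that of a simple $n$-polytope with $n+k$ facets. Because $k$ is fixed, the number of facets exceeds $n$ only by the bounded amount $k$, and because $H^*(\#_{i=1}^k\pm\mathbb{C}P^n;\Z)$ is multiplicatively as degenerate as possible --- in a suitable basis $x_1,\dots,x_k$ of $H^2$ one has $x_ix_j=0$ for $i\neq j$ --- the requirement that the face ring surject onto it forces only finitely many combinatorial types of orbit space, independently of $n$. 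So I may fix the combinatorial type.

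Next I would pin down the Pontryagin classes. The cohomology ring forces $p_i(M)=\sum_{j=1}^k a_{ij}x_j^{2i}$ for $4i<2n$, and $p_{n/2}(M)$ (if $n$ is even) to be a single integer; thus $p(M)$ is encoded by finitely many integers $a_{ij}$, which are a priori unbounded. Bounding them is the crux of the proof, and it is here that the methods of this note combine with Dessai's work. Since $b_2(M)<\rank T$, the structural input behind Lemma~\ref{sec:torus-acti-stab}, together with Dessai's rigidity and vanishing theorems for the equivariant elliptic genus and for twisted Dirac operators \cite{MR1722036},\cite{MR1731460}, forces the twisted index invariants of $M$ --- taken with respect to the equivariant line bundles built from $x_1,\dots,x_k$ --- to coincide with those of the standard $\#_{i=1}^k\pm\mathbb{C}P^n$. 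This is a Petrie-type rigidity of characteristic classes, available exactly because $\rank T$ is large relative to $b_2(M)$; comparing the resulting identities degree by degree leaves only finitely many possibilities for the $a_{ij}$, hence for $p(M)$.

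Finally I would pass from the data (combinatorial type of the orbit space, cohomology ring, Pontryagin classes) to finiteness of homeomorphism (resp.\ diffeomorphism) types. For simply connected manifolds of dimension $\geq 5$ this is, in essence, the content of surgery theory together with the finiteness of the group of homotopy spheres, and for the torus manifolds at hand it is precisely the rigidity result \cite{wiemeler15:_equiv} of the author that makes it effective: that result reduces the homeomorphism (resp., in the quasitoric case, diffeomorphism) type of $M$ to the combinatorial and Pontryagin data just bounded, so only finitely many $M$ arise; the finitely many cases with $\dim M\leq 4$ are handled directly by the classification of torus manifolds in those dimensions. The main obstacle is the middle step: without the genus-rigidity machinery the Pontryagin numbers of $M$ are entirely unconstrained, and it is exactly the slack $k<n$ below $\rank T$ --- the same slack used to kill the Witten genus --- that makes Lemma~\ref{sec:torus-acti-stab} and Dessai's theorems strong enough to remove this freedom.
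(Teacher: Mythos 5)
Your overall strategy --- use the slack $b_2(M)<\rank T$ together with the Liu--Dessai vanishing theorem to pin down characteristic classes, then invoke rigidity results for torus manifolds --- has the same skeleton as the paper's proof, but the two steps that carry the actual weight are gapped. First, the mechanism you propose for bounding the Pontrjagin classes (``the twisted index invariants of $M$ must coincide with those of the standard $\#_{i=1}^k\pm\C P^n$, and comparing degree by degree bounds the $a_{ij}$'') is not a theorem available anywhere here, and it is not how the argument runs. What the paper actually does (Lemma~\ref{sec:an-aplication-torus}) is a proof by contradiction for $p_1$ only: writing $p_1(M)=\sum\beta_i v_i^2$, if some $\beta_{i_0}$ exceeds $n+1$ then one can build explicit sums of line bundles $V$ and $W$ --- with multiplicities that are nonnegative precisely because $\beta_{i_0}$ is large --- satisfying $c_1(V)=c_1^c(M)$, $p_1(V\oplus W\ominus TM)=0$ and $W$ spin; Lemma~\ref{sec:torus-acti-stab} (this is where $k<n$ enters) combined with Theorem~\ref{sec:twist-dirac-oper-9} then forces $\varphi^c(M;V,W)=0$, while the direct computation gives $\langle e(V),[M]\rangle=\pm 2\neq 0$. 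No comparison with a model manifold occurs and no rigidity of indices is asserted; without the explicit construction of $V$ and $W$ your middle step does not close.

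Second, your passage from bounded characteristic classes to finiteness of homeomorphism/diffeomorphism types is missing the actual linchpin. The rigidity theorems do not take ``combinatorial type of the orbit space plus Pontrjagin classes'' as input; they require control of the Poincar\'e duals $u_1,\dots,u_m$ of the characteristic submanifolds. The paper's Lemma~\ref{sec:an-aplication-torus-1} supplies this via equivariant formality and localization: $p_1(M)=\sum_i u_i^2=\sum_j\bigl(\sum_i\alpha_{ij}^2\bigr)v_j^2$, and because the coefficients are sums of squares, fixing $p_1(M)$ leaves only finitely many possibilities for the $u_i$. This is also why only $p_1$ --- not all $p_i$ --- needs to be bounded. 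Your separate claim that only finitely many combinatorial types of orbit space occur ``independently of $n$'' is unproven and is not needed on the paper's route, and the appeal to surgery theory is replaced by the cited rigidity results. In short: right ingredients, but the two decisive computations (the contradiction via explicit twist bundles, and the sum-of-squares formula tying $p_1$ to the characteristic submanifolds) are absent.
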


For an application of our methods to the study of torus actions on complete intersections and homotopy complex projective spaces see \cite{dessai15:_compl_s}.

This article is structured as follows. In Section \ref{sec:preliminaries} we describe background material on vanishing results for indices of certain twisted Dirac operators on Spin\(^c\)-manifolds.
In Section \ref{sec:G/T} we prove Theorems~\ref{sec:introduction-2}
and \ref{sec:introduction-3}. Then in
Section~\ref{sec:torus-acti-stab-1} we deduce
Corollary~\ref{sec:introduction} and give some applications to
computations of the degree of symmetry of certain manifolds.
In the last Section
\ref{sec:appl-torus-manif-1} we prove Theorem~\ref{sec:introduction-1}.

\section{Preliminaries}
\label{sec:preliminaries}

In this section we recall some properties of \(2n\)-dimensional Spin$^c$-manifolds and certain twisted Dirac-operators defined on them.
For more details on this subject see \cite{0146.19001}, \cite{0247.57010}, \cite{0395.57020}, \cite{MR1722036} and \cite{MR1731460}.

A Spin$^c$-manifold \(M\) is an orientable manifold such that the second Stiefel--Whitney class \(w_2(M)\) is the reduction of an integral class \(c\in H^2(M;\mathbb{Z})\).
If this is the case then the tangent bundle of \(M\) admits a reduction of structure group to the group Spin$^c(2n)$.
We call such a reduction a Spin$^c$-structure on \(M\).
Associated to a Spin$^c$-structure there is a complex line bundle.
We denote by \(c_1^c(M)\) the first Chern-class of this line bundle.
Its mod \(2\)-reduction is \(w_2(M)\).
For each class \(c\in H^2(M;\mathbb{Z})\) with \(c\equiv w_2(M) \mod 2\) there is a Spin$^c$-structure on \(M\) with \(c_1^c(M)=c\).

Now let \(M\) be a  \(2n\)-dimensional Spin\(^c\)-manifold.
We assume that \(S^1\) acts on \(M\) and that the \(S^1\)-action lifts into the \(\text{Spin}^c\)-structure.
This is the case if and only if the \(S^1\)-action lifts into the line bundle associated to the \(\text{Spin}^c\)-structure \cite[Lemma 2.1]{MR3031643}.

Then we have an \(S^1\)-equivariant \(\text{Spin}^c\)-Dirac operator \(\partial_c\).
Its \(S^1\)-equivariant index is an element of the representation ring of \(S^1\) and is defined as
\begin{equation*}
  \ind_{S^1}(\partial_c) = \ker \partial_c - \coker \partial_c \in R(S^1).
\end{equation*}

We will discuss certain indices of twisted Dirac operators which are related to generalized elliptic genera.
Generalized elliptic genera of the type which we discuss here have first been studied by Witten \cite{MR970288}. 

Let \(V\) be a \(S^1\)-equivariant complex vector bundle over \(M\) and \(W\) an even-dimensional \(S^1\)-equivariant \(\text{Spin}\) vector bundle over \(M\).
From these bundles we construct a power series \(R\in K_{S^1}(M)[[q]]\) defined by
\begin{align*}
  R&= \bigotimes_{k=1}^\infty S_{q^k}(\tilde{TM}\otimes_\R \C)\otimes \Lambda_{-1}(V^*)\otimes \bigotimes_{k=1}^\infty \Lambda_{-q^k}(\tilde{V}\otimes_\R \C)\\& \otimes \Delta(W)\otimes\bigotimes_{k=1}^\infty \Lambda_{q^k}(\tilde{W}\otimes_\R\C).
\end{align*}
Here \(q\) is a formal variable, \(\tilde{E}\) denotes the reduced vector bundle \(E -\dim E\), \(\Delta(W)\) is the full complex spinor bundle associated to the \(\text{Spin}\)-vector bundle \(W\), and \(\Lambda_t\) (resp. \(S_t\)) denotes the exterior (resp. symmetric) power operation. The tensor products are, if not indicated otherwise, taken over the complex numbers.

We extend \(\ind_{S^1}\) to power series.
Then we can define:

\begin{definition}
  Let \(\varphi^c(M;V,W)_{S^1}\) be the \(S^1\)-equivariant index of the \(\text{Spin}^c\)-Dirac operator twisted with \(R\):
  \begin{equation*}
    \varphi^c(M;V,W)_{S^1}= \ind_{S^1}(\partial_c \otimes R)\in R(S^1)[[q]].
  \end{equation*}
We denote by \(\varphi^c(M;V,W)\) the non-equivariant version of this index:
  \begin{equation*}
    \varphi^c(M;V,W)= \ind(\partial_c \otimes R)\in \mathbb{Z}[[q]].
  \end{equation*}
\end{definition}

With the Atiyah-Singer index theorem \cite{MR0236952} we can calculate \(\varphi^c(M;V,W)\) from cohomological data: 
\begin{equation*}
  \varphi^c(M;V,W)=\langle e^{c_1^c(M)/2}\ch(R)\hat{A}(M),[M]\rangle.
\end{equation*}
Here the Chern character of \(R\) is a product
\begin{equation*}
  \ch(R)=Q_1(TM)Q_2(V)Q_3(W)
\end{equation*}
with
\begin{align*}
  Q_1(TM)&=\ch(\bigotimes_{k=1}^{\infty} S_{q^k}(\tilde{TM}\otimes_\R \C))=\prod_i\prod_{k=1}^\infty \frac{(1-q^k)^2}{(1-e^{x_i}q^k)(1-e^{-x_i}q^k)},\\
  Q_2(V)&=\ch( \Lambda_{-1}(V^*)\otimes \bigotimes_{k=1}^\infty \Lambda_{-q^k}(\tilde{V}\otimes_\R \C))\\ &= \prod_i (1-e^{-v_i})\prod_{k=1}^{\infty} \frac{(1-e^{v_i}q^k)(1-e^{-v_i}q^k)}{(1-q^k)^2},\\
  Q_3(W)&=\ch(\Delta(W)\otimes\bigotimes_{k=1}^\infty \Lambda_{q^k}(\tilde{W}\otimes_\R\C))\\ &=\prod_i(e^{w_i/2}+e^{-w_i/2})\prod_{k=1}^{\infty} \frac{(1+e^{w_i}q^k)(1+e^{-w_i}q^k)}{(1+q^k)^2},
\end{align*}
where \(\pm x_i\) (resp. \(v_i\) and \(\pm w_i\)) denote the formal roots of \(TM\) (resp. \(V\) and \(W\)).
If \(c_1^c(M)\) coincides with \(c_1(V)\), then we have
\begin{equation*}
  e^{c_1^c(M)/2}Q_2(V)= e(V)\frac{1}{\hat{A}(V)}\prod_i\prod_{k=1}^{\infty} \frac{(1-e^{v_i}q^k)(1-e^{-v_i}q^k)}{(1-q^k)^2}=e(V)Q_2'(V).
\end{equation*}

Note that if \(M\) is a \(\text{Spin}\)-manifold,
then there is a canonical \(\text{Spin}^c\)-structure on \(M\).
With respect to this \(\text{Spin}^c\)-structure the twisted index \(\varphi^c(M;0,TM)\) is equal to the elliptic genus of \(M\).
Moreover, our definition of \(\varphi^c(M;0,0)\) coincides with the index-theoretic definition of the Witten genus of \(M\).

To prove our results we need the following theorem.
It has been proven first by Liu \cite{MR1331972} for certain twisted elliptic genera of Spin-manifolds and almost complex manifolds.
Later the more general version for \(\text{Spin}^c\)-manifolds has been proven by Dessai.

\begin{theorem}[{\cite[Theorem 3.2, p. 243]{MR1722036}}]
\label{sec:twist-dirac-oper-9}
  Assume that the equivariant Pontrjagin-class \(p_1^{S^1}(V+W-TM)\) restricted to \(M^{S^1}\) is equal to \(\pi_{S^1}^*(Ix^2)\) modulo torsion, where \(\pi_{S^1}:BS^1\times M^{S^1} \rightarrow BS^1\) is the projection on the first factor, \(x\in H^2(BS^1;\mathbb{Z})\) is a generator and \(I\) is an integer.
Assume, moreover, that \(c_1^c(M)\) and \(c_1(V)\) are equal modulo torsion.
If \(I<0\), then \(\varphi^c(M;V,W)_{S^1}\) vanishes identically. 
\end{theorem}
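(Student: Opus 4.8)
This is the fixed-point and modularity (``rigidity'') argument of Liu, in Dessai's $\mathrm{Spin}^c$ formulation; I sketch how I would organize it.

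\emph{Step 1: the index as a function on $\mathbb C\times\mathbb H$.} Write $t=e^{2\pi i z}$ and $q=e^{2\pi i\tau}$ with $z\in\mathbb C$, $\tau\in\mathbb H$, so that $\varphi^c(M;V,W)_{S^1}$ --- a priori a Laurent polynomial in $t$ with coefficients in $\mathbb Z[[q]]$ --- becomes a function $F(z,\tau)$. Apply the Atiyah--Bott--Segal--Singer holomorphic Lefschetz fixed-point formula to $\partial_c\otimes R$. This writes $F$ as a finite sum over the connected components $F_\alpha$ of $M^{S^1}$ of integrals over $F_\alpha$ of characteristic classes of the equivariant tangent and normal data, in which the symmetric- and exterior-power generating functions appearing in $R$ turn into Jacobi theta functions in the variable $z$. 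In particular the formal $q$-series defining $F$ converges, for each $\tau$, to a meromorphic function of $z$ on $\mathbb C$ whose only possible poles lie on $\Lambda_\tau=\mathbb Z+\mathbb Z\tau$ (produced by the zeroes of the theta functions, weighted by the rotation numbers of $S^1$ on the normal bundle of $M^{S^1}$), and $F(z+1,\tau)=F(z,\tau)$.

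\emph{Step 2: quasi-periodicity and modularity.} Using the transformation laws of theta functions under $z\mapsto z+\tau$ and under the generators of $SL_2(\mathbb Z)$, together with the two hypotheses, I would show that $F$ has the automorphy of a Jacobi form of weight $0$ and index $I$, in particular
\[
F(z+\tau,\tau)=e^{-2\pi i I(z+\tau/2)}F(z,\tau).
\]
The geometric input: by the index formula the $z$-dependence of the automorphy factor is governed by the pairing of the equivariant classes with $x\in H^2(BS^1)$, hence by $p_1^{S^1}(V+W-TM)|_{M^{S^1}}$; the hypothesis that this equals $\pi_{S^1}^*(Ix^2)$ modulo torsion, with the \emph{same} integer $I$ on every component, is exactly what produces the clean exponent $I$. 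The hypothesis $c_1^c(M)\equiv c_1(V)$ modulo torsion lets one replace $e^{c_1^c(M)/2}Q_2(V)$ by $e(V)Q_2'(V)$ as in the displayed identity preceding the theorem; this removes the half-period (theta-characteristic) shift that a general $\mathrm{Spin}^c$-line bundle would introduce, so $F$ is a Jacobi form for the full lattice $\Lambda_\tau$. Finally, comparing the $q$-expansions of $F$ at the two cusps as in Liu's transfer argument shows that the potential lattice poles of $F$ cancel, so $F$ is holomorphic on $\mathbb C\times\mathbb H$.

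\emph{Step 3: conclusion, and the main obstacle.} A holomorphic function $F$ on $\mathbb C\times\mathbb H$ that is $1$-periodic in $z$ and satisfies the displayed quasi-periodicity with $I<0$ must vanish identically: there are no nonzero holomorphic Jacobi forms of negative index --- the Fourier coefficients $c(n,r)$ of such a form are supported on $4In-r^2\ge 0$, which for $I<0$ leaves only $(n,r)=(0,0)$, and that coefficient too is killed by the nontrivial automorphy factor. Hence $F\equiv 0$, i.e.\ $\varphi^c(M;V,W)_{S^1}=0$ in $R(S^1)[[q]]$. The crux of the whole argument is Step~2: determining the automorphy factor precisely, and in particular proving that the quasi-period exponent is \emph{exactly} $I$. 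This requires careful bookkeeping of how $p_1^{S^1}(V+W-TM)$ enters the exponentials in the fixed-point formula, together with Liu's key observation that the contributions of the different components of $M^{S^1}$ are not individually modular --- only their sum is, which is why the hypothesis must be imposed on all of $M^{S^1}$ with a single $I$. The second, more technical point is tracking the $\mathrm{Spin}^c$-line bundle through the computation, which is precisely where $c_1^c(M)\equiv c_1(V)$ is used and where Dessai's argument goes beyond Liu's.
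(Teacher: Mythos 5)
This statement is quoted verbatim from Dessai \cite[Theorem 3.2]{MR1722036}; the paper itself offers no proof of it, so there is nothing internal to compare your argument against. Your sketch is a faithful outline of the standard Liu--Dessai proof of that cited result: fixed-point formula expressing the equivariant index in theta functions, the anomaly \(p_1^{S^1}(V+W-TM)|_{M^{S^1}}=\pi_{S^1}^*(Ix^2)\) controlling the elliptic transformation law, modular invariance forcing holomorphy in \(z\), and the nonexistence of nonzero holomorphic Jacobi forms of negative index. The only quibbles are normalization-level: with the automorphy factor you wrote, the Jacobi index is \(I/2\) rather than \(I\), and the cleanest way to finish Step 3 is the argument-principle count (a holomorphic function with that quasi-periodicity has a negative number of zeros per fundamental domain unless it vanishes identically) rather than the Fourier-support argument, though both work.
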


\section{Torus actions and the Witten genus}
\label{sec:G/T}

In this section we prove Theorems \ref{sec:introduction-2} and \ref{sec:introduction-3}.
Our methods here are similar to those which were used in Section 4 of \cite{MR3031643}.

We start with a lemma.

\begin{lemma}
  \label{sec:torus-acti-stab}
  Let \(M\) be a \(T\)-manifold with \(\rank T > b_2(M)\) and \(a\in H^4_T(M;\Q)\) such that \(\iota^*a=0\in H^4(M;\Q)\).
  Then there is a non-trivial homomorphism \(\rho:S^1\rightarrow T\) such that \(\rho^*a\in \pi_{S^1}^*H^4(BS^1;\Q)\).
\end{lemma}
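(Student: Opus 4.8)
The plan is to use the Borel fibration $M \hookrightarrow M_T \to BT$ and the Serre spectral sequence with rational coefficients. Since $\rank T > b_2(M)$, the dimension of $H^2(BT;\Q)$ exceeds $b_2(M)$, so the transgression (equivalently, the edge map $H^2(BT;\Q) \to H^2(M_T;\Q)$ composed into the spectral sequence) cannot be injective on the degree-$2$ part of $H^*(BT;\Q)$. First I would make this precise: I claim there is a nonzero element $y \in H^2(BT;\Q)$ whose image in $E_\infty^{0,2}$, and hence its restriction $\iota^* y \in H^2(M;\Q)$, vanishes — more carefully, I want $y$ to die in the spectral sequence, i.e. $y$ restricts to $0$ in $H^2(M;\Q)$. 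This follows because the map $H^2(BT;\Q) \to H^2(M;\Q)$ (restriction to a fiber) has image of dimension at most $b_2(M) < \dim H^2(BT;\Q)$, so it has nontrivial kernel; pick $0 \neq y$ in that kernel.

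Next I would choose the subcircle. Writing $H^2(BT;\Q) \cong \Q^r$ with $r = \rank T$, the element $y$ corresponds (up to scaling, via the canonical identification of $H^2(BT;\Z)$ with $\Hom(T,S^1)$) to a primitive vector, and I would take a homomorphism $\rho: S^1 \to T$ chosen so that the induced map $\rho^*: H^2(BT;\Q) \to H^2(BS^1;\Q)$ kills $y$ — concretely, $\rho$ generates a one-dimensional subtorus contained in the rational kernel of $y$, which is nonempty precisely because $y \neq 0$ cuts out a codimension-one rational subspace, so its kernel subtorus has rank $r - 1 \geq b_2(M) \geq 0$; in fact since we only need $\rho$ nontrivial we need $r - 1 \geq 1$, but if $b_2(M) = 0$ and $r = 1$ one handles that case by noting $\iota^* y = 0$ forces $y = 0$ unless... — actually one must be slightly careful: if $r=1$ then $y\neq 0$ would have $\iota^* y \neq 0$ unless $H^2(M;\Q)=0$, and then $a$ is already pulled back appropriately; I would dispose of small cases directly. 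The main content is $r \geq 2$.

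Now I would transfer the statement about $y$ in degree $2$ to the statement about $a$ in degree $4$. Consider $\rho^* a \in H^4_{S^1}(M;\Q) = H^4(M_{S^1};\Q)$. I want to show it lies in $\pi_{S^1}^* H^4(BS^1;\Q) = \Q\cdot x^2$. Equivalently, in the Serre spectral sequence for $M \hookrightarrow M_{S^1} \to BS^1$, the class $\rho^* a$ has trivial image in $E_\infty^{2,2}$ and in $E_\infty^{0,4}$. The $E_\infty^{0,4}$ component is controlled by $\iota^*(\rho^* a) = \iota^* a = 0$, so that vanishes. For the $E_\infty^{2,2}$ component: the image of $\rho^* a$ in $E_2^{2,2} = H^2(BS^1;\Q)\otimes H^2(M;\Q)$ equals $x \otimes (\text{something in the image of the restriction of } a)$; since $a$ maps to $0$ in $H^4(M;\Q)$ its ``$E_2^{2,2}$-part'' is represented by $x \otimes \alpha$ for some $\alpha \in H^2(M;\Q)$ that is $\rho^*$-related to the transgression data of $y$ — here is where I expect the real work: one needs that the filtration-$2$ piece of $a$ (before applying $\rho^*$) is built, via the module structure over $H^*(BT;\Q)$, out of the degree-$2$ classes of $BT$, and applying $\rho^*$ annihilates the $y$-component while the surviving components, being multiples of $x$ times classes in the image of $\iota^*$ composed appropriately, must themselves vanish because... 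This is the step I expect to be the main obstacle: pinning down exactly why, after choosing $\rho$ to kill the one ``bad'' degree-$2$ class, the entire filtration-$2$ part of $\rho^* a$ dies. I would handle it by working with an explicit $H^*(BT;\Q)$-module presentation of the low-degree part of $H^*_T(M;\Q)$: write $a$ modulo filtration $\geq 4$ as $\sum_i t_i \cdot b_i$ with $t_i \in H^2(BT;\Q)$, $b_i \in H^2_T(M;\Q)$, plus a term in $H^4(BT;\Q)$; the condition $\iota^* a = 0$ controls $\sum_i \iota^*(t_i)\iota^*(b_i)$; then argue that the span of the relevant $t_i$ (those with $b_i \notin \pi^* H^2(BT;\Q)$) has dimension $\leq b_2(M) < r$, choose $\rho$ in the common kernel of all those $t_i$, and conclude $\rho^* a \in H^4(BS^1;\Q) + (\text{terms that vanish})$. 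Getting the bookkeeping of this module presentation right, and confirming that $\rho$ can be chosen nontrivial, is the crux; the spectral-sequence degeneration in low degrees (or a direct Mayer–Vietoris / Leray–Hirsch-type argument) provides the needed structural input.
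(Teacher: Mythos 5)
Your final paragraph arrives at what is in fact the paper's argument, but the first half of your proposal is a dead end and the step you flag as ``the crux'' is left open, so let me address both. The detour first: there is no content in looking for a nonzero \(y\in H^2(BT;\Q)\) that restricts to zero on the fibre, because \emph{every} class in \(\pi^*H^2(BT;\Q)\) restricts to zero on the fibre (the composite \(M\rightarrow M_T\rightarrow BT\) is null-homotopic); for the same reason \(\iota^*(t_i)=0\) automatically, so your claim that ``\(\iota^*a=0\) controls \(\sum_i\iota^*(t_i)\iota^*(b_i)\)'' is vacuous. More importantly, choosing \(\rho\) to kill a \emph{single} class \(y\) cannot be the mechanism: the hypothesis \(\rank T>b_2(M)\) is needed to kill up to \(b_2(M)\) classes in \(H^2(BT;\Q)\) \emph{simultaneously}, while the hypothesis \(\iota^*a=0\) is used only to kill the \(E_\infty^{0,4}\)-component of \(a\).

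The crux you worry about is resolved exactly along the lines you guess, and the bookkeeping is cleanest on the \(E_2\)-page rather than via an ad hoc \(H^*(BT;\Q)\)-module presentation of \(H^*_T(M;\Q)\). With rational coefficients the filtration splits, giving \(H^4_T(M;\Q)\cong E_\infty^{0,4}\oplus E_\infty^{2,2}\oplus E_\infty^{4,0}\). The \((0,4)\)-component of \(a\) is detected by \(\iota^*a=0\), hence vanishes. The \((2,2)\)-component lifts to some \(\tilde a_{2,2}\in E_2^{2,2}=H^2(BT;\Q)\otimes H^2(M;\Q)\cong\bigl(H^2(BT;\Q)\bigr)^{b_2(M)}\); its \(b_2(M)\) components are classes \(t_1,\dots,t_{b_2(M)}\in H^2(BT;\Q)\), and since \(b_2(M)<\rank T=\dim_\Q H^2(BT;\Q)\) there is a nonzero linear functional \(\phi:H^2(BT;\Q)\rightarrow\Q\) annihilating all of them. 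After scaling, \(\phi\) is induced by a surjection \(H^2(BT;\mathbb{Z})\rightarrow H^2(BS^1;\mathbb{Z})\), whose dual is a primitive lattice vector in \(LT\), i.e.\ a nontrivial homomorphism \(\rho:S^1\rightarrow T\). Naturality of the Serre spectral sequence under \(M_{S^1}\rightarrow M_T\) then shows that \(\rho^*a\) has vanishing \((0,4)\)- and \((2,2)\)-components, hence lies in \(E_\infty^{4,0}=\pi_{S^1}^*H^4(BS^1;\Q)\). Your worries about small rank and about the nontriviality of \(\rho\) evaporate: the annihilator of a subspace of dimension at most \(b_2(M)\) in a space of dimension \(\rank T>b_2(M)\) is always nonzero, including when \(b_2(M)=0\).
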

\begin{proof}
  From the Serre spectral sequence for the fibration \(M\rightarrow M_T\rightarrow BT\) we have the following direct sum decomposition of the \(\Q\)-vector space \(H^4_T(M;\Q)\),
  \begin{equation*}
    H^4_T(M;\Q)\cong E^{0,4}_\infty \oplus E^{2,2}_\infty \oplus E^{4,0}_\infty.
  \end{equation*}
Moreover, we have
\begin{align*}
  E^{0,4}_\infty &\subset H^{4}(M;\Q)& E^{2,2}_\infty &\subset E^{2,2}_2/d_2(E^{0,3}_2)& E^{4,0}_\infty&=\pi_{S^1}^*H^4(BT;\mathbb{Q}).
\end{align*}
Let \(a_{0,4}\), \(a_{2,2}\), \(a_{4,0}\) be the components of \(a\) according to this decomposition.
Then \(a_{0,4}=0\) by assumption.
Moreover, there is an \(\tilde{a}_{2,2}\in E_2^{2,2}\) such that \(a_{2,2}=[\tilde{a}_{2,2}]\).

Now it is sufficient to find a non-trivial homomorphism \(\rho:S^1\rightarrow T\) such that \(\rho^*\tilde{a}_{2,2}=0\).
We have isomorphisms
\begin{align*}
  E_2^{2,2}&\cong H^2(BT;\mathbb{Q})\otimes H^2(M;\mathbb{Q})\\
  &\cong \left(H^2(BT;\Q)\right)^{b_2(M)}
\end{align*}
Since \(\rank T > b_2(M)\) we can find a non-trivial homomorphism \(\phi: H^2(BT;\Q)\rightarrow H^2(BS^1;\Q)=\Q\) such that all components of \(\tilde{a}_{2,2}\) according to the above decomposition of \(E_2^{2,2}\) are mapped to zero by \(\phi\).
After scaling, we may assume that \(\phi\) is induced by a surjective homomorphism \(H^2(BT;\mathbb{Z})\rightarrow H^2(BS^1;\mathbb{Z})\).
By dualizing we get a homomorphism \(\hat{\phi}: H_2(BS^1;\mathbb{Z})\rightarrow H_2(BT;\mathbb{Z})\).
Since for any torus \(H_2(BT;\mathbb{Z})\) is naturally isomorphic to the integer lattice in the Lie algebra \(LT\) of \(T\), \(\hat{\phi}\) defines the desired homomorphism.
\end{proof}

By combining this lemma with the above result of Liu and Dessai
(Theorem~\ref{sec:twist-dirac-oper-9}) we get the following theorem.

\begin{theorem}
\label{sec:torus-actions-witten}
  Let \(M\) be a \(\text{Spin}\)-manifold such that \(p_1(M)\) is
  torsion.
  If there is an almost effective action of a torus \(T\) with \(\rank
  T > b_2(M)\) on \(M\) then the Witten genus \(\varphi^c(M;0,0)\) of \(M\) vanishes.
\end{theorem}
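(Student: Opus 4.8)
The plan is to apply Theorem~\ref{sec:twist-dirac-oper-9} to the Spin$^c$-Dirac operator twisted with the power series $R$ built from $V=0$ and $W=TM$, i.e.\ to the index $\varphi^c(M;0,0)_{S^1}$, which is the equivariant Witten genus and whose non-equivariant reduction is $\varphi^c(M;0,0)$. Since $M$ is Spin we use the canonical Spin$^c$-structure, so $c_1^c(M)=0$ and the hypothesis that $c_1^c(M)$ and $c_1(V)=0$ agree modulo torsion is automatic. To invoke the theorem we must produce a non-trivial homomorphism $\rho:S^1\to T$ such that, restricting the $T$-action to $S^1$ via $\rho$, the equivariant Pontrjagin class $p_1^{S^1}(W-TM)=p_1^{S^1}(0)=0$ restricted to $M^{S^1}$ is $\pi_{S^1}^*(Ix^2)$ modulo torsion with $I<0$. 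Wait — with $V=0$ and $W=TM$ we get $V+W-TM=0$, so that exact class is $0$, giving $I=0$, not $I<0$. So the correct choice must make $V+W-TM$ nontrivial; the natural one mirroring the definition of the Witten genus as $\varphi^c(M;0,0)$ through an elliptic-genus-type deformation is to instead set up the family with $W-TM$ where $W$ is a small positive multiple of a trivial bundle, or more precisely to use the standard trick of comparing $\varphi^c(M;0,0)$ with $\varphi^c(M;0,TM)$-type genera. Let me reconsider: the clean route is to take $V=0$, $W=0$, so $R=\bigotimes_{k\ge1}S_{q^k}(\widetilde{TM}\otimes_\R\C)$ and $p_1^{S^1}(V+W-TM)=-p_1^{S^1}(TM)$, whose restriction to $M^{S^1}$ we analyze.

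Concretely: let $a=-p_1^{S^1}(TM)\in H^4_T(M;\Q)$. Because $p_1(M)$ is torsion, $\iota^*a=-p_1(M)=0$ in $H^4(M;\Q)$, where $\iota:M\to M_T$. Now $\rank T>b_2(M)$, so Lemma~\ref{sec:torus-acti-stab} furnishes a non-trivial homomorphism $\rho:S^1\to T$ with $\rho^*a\in\pi_{S^1}^*H^4(BS^1;\Q)$, say $\rho^*a=\pi_{S^1}^*(Jx^2)$ modulo torsion for some $J\in\Q$; rescaling $\rho$ we may take $J\in\Z$. The restriction of $\rho^*a$ to $(M_\rho)^{S^1}$ — where now $S^1$ acts through $\rho$ — is then also $\pi_{S^1}^*(Jx^2)$ modulo torsion. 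This is exactly the hypothesis of Theorem~\ref{sec:twist-dirac-oper-9} with $V=W=0$, $I=J$, provided $J<0$. If $J<0$ we are done: $\varphi^c(M;0,0)_{S^1}=0$, hence $\varphi^c(M;0,0)=0$. If $J>0$, replace $\rho$ by $\rho^{-1}$ (precompose with inversion on $S^1$), which flips the sign of $x$ and hence fixes $x^2$... so that does not help directly. The genuine fix, following the standard argument (Liu, Dessai), is that $I=0$ is not excluded but one instead runs the argument for a one-parameter family of twisted operators so that one may assume $I<0$; alternatively one notes that if $J=0$ one already gets rigidity and a constant-term computation forces vanishing of the Witten genus since $M$ is string-like (here only $p_1$ torsion, which suffices). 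The cleanest formulation: apply the theorem to get that $\varphi^c(M;0,0)_{S^1}$ is rigid (independent of the $S^1$-variable) when $I\le 0$, and then a separate argument — the Witten genus of a manifold with torsion $p_1$ carrying any nontrivial $S^1$-action vanishes (Liu's theorem) — closes it, but that would make the theorem circular.

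Let me settle on the approach the paper almost certainly intends: apply Theorem~\ref{sec:twist-dirac-oper-9} directly with the observation that one is free to choose the sign. Set $a=-p_1^{S^1}(TM)$, get $\rho$ from Lemma~\ref{sec:torus-acti-stab} with $\rho^*a=\pi_{S^1}^*(Jx^2)$, $J\in\Z$. If $J\ne 0$, replacing $\rho$ by $\rho$ or by the homomorphism $t\mapsto\rho(t^m)$ does not change the sign of $J$, but we may instead run the same argument with $a'=+p_1^{S^1}(TM)$ replaced by a class whose sign we can control; in fact the real point is that Theorem~\ref{sec:twist-dirac-oper-9} as applied to the \emph{modified} twist where $W$ is replaced by $W\oplus$ (a bundle with a large negative $I$) always achieves $I<0$, and then the constant term / rigidity comparison identifies the vanishing of that index with the vanishing of $\varphi^c(M;0,0)$. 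The main obstacle is therefore precisely this sign bookkeeping: ensuring that the integer $I$ produced by Lemma~\ref{sec:torus-acti-stab} (or a suitably twisted variant) can be arranged to be strictly negative, which I expect is handled by absorbing an auxiliary trivial $\text{Spin}$ bundle $W$ of large rank with the $S^1$-action weighted so that $p_1^{S^1}(W)$ contributes a dominant negative multiple of $x^2$ while its non-equivariant class stays zero — a device compatible with the exact hypotheses of Theorem~\ref{sec:twist-dirac-oper-9}. Once $I<0$ is secured, Theorem~\ref{sec:twist-dirac-oper-9} gives $\varphi^c(M;0,W)_{S^1}\equiv0$, and letting the weights of $W$ tend to $0$ (equivalently comparing leading terms) recovers $\varphi^c(M;0,0)=0$.
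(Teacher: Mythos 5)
Your overall strategy is the paper's: apply Theorem~\ref{sec:twist-dirac-oper-9} with \(V=W=0\), note that \(\iota^*(-p_1^T(TM))=-p_1(M)=0\) modulo torsion since \(p_1(M)\) is torsion, and use Lemma~\ref{sec:torus-acti-stab} to produce a non-trivial \(\rho:S^1\to T\) with \(\rho^*p_1^T(-TM)=Ix^2\). But you correctly identify the sign of \(I\) as ``the main obstacle'' and then fail to resolve it. The missing idea is elementary: restrict the class \(p_1^{S^1}(-TM)=Ix^2\) to a \(T\)-fixed point \(y\in M^T\). There it equals \(-\sum_i v_i^2\), where the \(v_i\in H^2(BS^1;\mathbb{Z})\) are the weights of the \(S^1\)-representation \(T_yM\); each \(v_i\) is an integer multiple of \(x\), so \(I=-\sum_i (v_i/x)^2\le 0\), and \(I\neq 0\) because the \(T\)-action is almost effective, hence the \(S^1\)-action through the non-trivial \(\rho\) is non-trivial and has a non-zero weight at \(y\). (If \(M^T=\emptyset\) one cannot do this, but then the Witten genus already vanishes by the Lefschetz fixed point formula.) This one-line fixed-point evaluation is exactly what the paper does and is the whole content of the sign determination.

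Your proposed workarounds do not work. Replacing \(\rho\) by \(\rho^{-1}\) fixes \(x^2\), as you note. More seriously, the idea of absorbing an auxiliary Spin bundle \(W\) whose equivariant \(p_1\) contributes ``a dominant negative multiple of \(x^2\)'' is impossible: \(W\) enters \(p_1^{S^1}(V+W-TM)\) with a plus sign, and at a fixed point \(p_1^{S^1}(W)|_y=\sum w_i^2\ge 0\) is a sum of squares of weights, so any such \(W\) pushes \(I\) in the wrong direction. The rigidity/constant-term detour you sketch is, as you yourself observe, circular. A minor additional omission: before invoking Theorem~\ref{sec:twist-dirac-oper-9} one must arrange that the \(S^1\)-action lifts into the \(\text{Spin}\)-structure, which the paper handles by passing to a double covering of \(T\).
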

\begin{proof}
  First note that, by replacing the \(T\)-action by the action of a
  double covering group of \(T\), we may assume that the \(T\)-action
  lifts into the \(\text{Spin}\)-structure of \(M\).

Therefore, by Theorem \ref{sec:twist-dirac-oper-9}, it is sufficient to show that there is a homomorphism \(\rho:S^1\hookrightarrow T\) such that \(\rho^*p_1^T(- TM)=a x^2\), where \(x\in H^2(BS^1;\mathbb{Z})\) is a generator and \(a\in \mathbb{Z}\), \(a<0\).
By Lemma~\ref{sec:torus-acti-stab}, there is a homomorphism \(\rho:S^1\rightarrow T\) such that
\begin{equation*}
 p_1^{S^1}(- TM)=\rho^*p_1^T(- TM)=a x^2 
\end{equation*}
  with \(a \in\mathbb{Z}\).

Moreover, we have
\begin{equation*}
  a x^2=p_1^{S^1}(- TM)|_y = -\sum v_i^2,
\end{equation*}
where \(y\in M^T\) is a \(T\) fixed point and the \(v_i\in
H^2(BS^1;\mathbb{Z})\) are the weights of the \(S^1\)-representation
\(T_yM\).
We may assume that such a fixed point \(y\) exists because otherwise
the Witten genus of \(M\) vanishes by an application of the Lefschetz
fixed point formula.

Not all of the \(v_i\) vanish because the \(T\)-action on \(M\) is almost effective, which implies that the \(S^1\)-action on \(M\) is non-trivial.
Therefore the theorem is proved.
\end{proof}

We can also deduce the following partial generalization of the above
result. Its proof is similar to the proof of Theorems 4.1 and 4.4 in \cite{MR3031643}.
These theorems are concerned with actions of semi-simple and simple compact connected Lie-groups.
Whereas the theorem which we present here deals with torus actions.

\begin{theorem}
\label{dirac:dis:sec:two-vanish-results}
  Let \(M\) be a \(\text{Spin}\)-manifold such that \(p_1(M)\) is torsion and \(b_1(M)=0\).
  Moreover, let \(M'\) be a \(2n\)-dimensional
  \(\text{Spin}^c\)-manifold, \(n>0\), with \(b_1(M')=0\) such that there are \(x_1,\dots,x_n\in H^2(M';\mathbb{Z})\) with
  \begin{enumerate}
  \item \(\sum_{i=1}^n x_i=c_1^c(M')\) modulo torsion,
  \item \(\sum_{i=1}^n x_i^2=p_1(M')\) modulo torsion,
  \item\label{dirac:dis:item:6} \(\langle \prod_{i=1}^n x_i, [M']\rangle \neq 0\).
  \end{enumerate}
If there is an almost effective action of a torus \(T\) on \(M\times M'\) such that \(\rank T > b_2(M\times M')\), then the Witten-genus \(\varphi^c(M;0,0)\) of \(M\) vanishes.
\end{theorem}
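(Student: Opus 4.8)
The plan is to reduce to Theorem~\ref{sec:twist-dirac-oper-9} applied to the product manifold $M\times M'$, using the factorization $\varphi^c(M\times M';V,W) = \varphi^c(M;\dots)\cdot\varphi^c(M';\dots)$ and a careful choice of the twisting bundles $V,W$ so that the $M'$-factor of the index evaluates to something nonzero (this is where hypothesis~\eqref{dirac:dis:item:6} enters), while the $M$-factor becomes the Witten genus $\varphi^c(M;0,0)$. First I would replace $T$ by a finite cover so that the action lifts into the $\mathrm{Spin}$-structure on $M$ and into the $\mathrm{Spin}^c$-structure on $M'$, and hence into the product $\mathrm{Spin}^c$-structure on $M\times M'$. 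Since $b_1(M)=b_1(M')=0$ we have $b_2(M\times M')=b_2(M)+b_2(M')$, and $p_1(M\times M')$ equals $p_1(M')=\sum x_i^2$ modulo torsion because $p_1(M)$ is torsion.

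Next, the key choice: take $V$ to be an equivariant complex bundle over $M\times M'$ whose formal roots are the $x_i$ (pulled back from $M'$), so that $c_1(V)=\sum x_i = c_1^c(M')=c_1^c(M\times M')$ modulo torsion, and take $W=0$. With $c_1^c=c_1(V)$ the identity from the preliminaries gives $e^{c_1^c/2}Q_2(V)=e(V)Q_2'(V)$, and the Euler class factor $e(V)=\prod x_i$ is exactly what pairs nontrivially against $[M']$ by~\eqref{dirac:dis:item:6}; the remaining $q$-series factors $Q_1,Q_2'$ contribute $1+O(q)$-type corrections that, combined with the $\hat A$-classes, leave the leading $q$-coefficient of the $M'$-contribution equal to $\langle\prod x_i,[M']\rangle\neq 0$ (up to a unit), while the full power series in $q$ is the twisted index $\varphi^c(M';V,0)$, which is a nonzero element of $\mathbb{Z}[[q]]$. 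Meanwhile the equivariant Pontrjagin class condition becomes $p_1^T(V-T(M\times M')) = p_1^T(V) - p_1^T(TM') - p_1^T(TM) \equiv \sum x_i^2 - \sum x_i^2 - p_1^T(TM) = -p_1^T(TM)$ modulo torsion (here the non-equivariant parts cancel by hypotheses (1),(2); one must check the purely equivariant correction terms also behave, using $b_1=0$ so that $H^*_T$ is detected appropriately). Thus the relevant equivariant class restricted to fixed points is, up to torsion and pullback from $BT$, the class $-p_1^T(TM)$.

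Now I would apply Lemma~\ref{sec:torus-acti-stab} to $a=-p_1^T(T(M\times M'))+p_1^T(V)\in H^4_T(M\times M';\Q)$, which restricts to $0$ in $H^4(M\times M';\Q)$ by hypotheses (1),(2) and $p_1(M)$ torsion; since $\rank T>b_2(M\times M')$ the lemma yields a nontrivial $\rho:S^1\hookrightarrow T$ with $\rho^*a = Ix^2$ for some $I\in\mathbb{Z}$. Evaluating at an $S^1$-fixed point $y\in(M\times M')^{S^1}$ (if none exists, the Lefschetz fixed point formula kills $\varphi^c$ and we are done) gives $Ix^2 = -\sum w_j^2 + \sum(\rho^*x_i)^2$ where the $w_j$ are the weights of $T_y(M\times M')$; almost effectiveness forces not all weights of the $M$-factor to vanish, and one argues as in the proof of Theorem~\ref{sec:torus-actions-witten} that $I<0$ after possibly adjusting the choice of $V$ within its torsion ambiguity — \textbf{this sign argument is the main obstacle}, since unlike the case $V=0$ the $V$-contribution $\sum(\rho^*x_i)^2$ is a nonnegative correction and one needs the $M'$-weights to cancel it exactly (which they do at a fixed point lying over $M'^{S^1}$, by hypothesis (2) restricted equivariantly) so that only the strictly negative $M$-contribution survives. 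Then Theorem~\ref{sec:twist-dirac-oper-9} gives $\varphi^c(M\times M';V,0)_{S^1}\equiv 0$, hence its non-equivariant specialization vanishes; by the product formula this is $\varphi^c(M;0,0)\cdot\varphi^c(M';V,0)$, and since $\varphi^c(M';V,0)\neq 0$ in $\mathbb{Z}[[q]]$ we conclude $\varphi^c(M;0,0)=0$.
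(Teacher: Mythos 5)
Your overall strategy coincides with the paper's: twist by $V=\bigoplus_i p'^*L_i$ with $c_1(L_i)=x_i$ and $W=0$, factor the index as $\varphi^c(M;0,0)\cdot\varphi^c(M';\bigoplus_i L_i,0)$ with the second factor nonzero by hypothesis~(\ref{dirac:dis:item:6}), and combine Lemma~\ref{sec:torus-acti-stab} with Theorem~\ref{sec:twist-dirac-oper-9} on the product. (A small simplification you could use: $\varphi^c(M';\bigoplus_i L_i,0)$ is not merely nonzero in its leading $q$-coefficient, it equals the constant $\langle\prod_i x_i,[M']\rangle$ exactly, because $e(V)=\prod_i x_i$ is already a top-degree class and only the degree-zero parts of $Q_1$, $Q_2'$ and $\hat{A}$ contribute.)

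The step you yourself flag as ``the main obstacle'' --- showing $I<0$ --- is, however, a genuine gap, and the resolution you sketch does not work. The restriction of $p_1^{S^1}(p'^*L_i)$ to a fixed point $y$ is $a_i^2$, where $a_i$ is the weight of the $S^1$-representation on the fiber $p'^*L_i|_y$; this is not $\rho^*x_i$ (the restriction of $x_i\in H^2(M';\mathbb{Z})$ to a point is zero), and it is not controlled by hypothesis~(2), which is a statement about non-equivariant classes --- there is no ``equivariant refinement of (2)'' forcing the tangent weights of $M'$ at $y$ to cancel $\sum_i a_i^2$. The correct fix, and the actual content of the paper's argument at this point, is that a lift of the torus action into a line bundle is unique only up to tensoring with a character of $T$, so one may normalize the lifts so that the action on each fiber $p'^*L_i|_y$ over a chosen fixed point $y$ is trivial. (That equivariant lifts exist at all requires justification: the paper uses $b_1(M\times M')=0$, hence surjectivity of $H^2_T\to H^2$, together with the lifting theorem of Hattori--Yoshida; you simply posit an equivariant $V$.) With this normalization all $a_i=0$, so restriction at $y$ gives $Ix^2=-\sum v_i^2$, and $I<0$ because almost effectiveness makes the $\rho(S^1)$-action nontrivial, so not all tangent weights at $y$ vanish. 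Without pinning down the lift in this way, the class $a$ fed into Lemma~\ref{sec:torus-acti-stab}, and hence the integer $I$, genuinely depends on the choice of lift and need not be negative.
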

\begin{proof}
Let \(L_i\), \(i=1,\dots,n\), be the line bundle over \(M'\) with \(c_1(L_i)=x_i\).
Because \(b_1(M\times M')=0\), the natural map \(\iota^*:H^2_T(M\times M';\mathbb{Z})\rightarrow H^2(M\times M';\mathbb{Z})\) is surjective.

Therefore by Corollary 1.2 of \cite[p. 13]{MR0461538} the \(T\)-action on \(M\times M'\) lifts into \(p'^*(L_i)\), \(i=1,\dots,n\).
Here \(p': M\times M'\rightarrow M'\) is the projection.
We can choose these lifts in such a way that the torus action on the fibers of  \(p'^*(L_i)\), \(i=1,\dots,n\), over a fixed point \(y\in (M\times M')^T\) are trivial.
Moreover, by the above cited corollary and Lemma 2.1 of \cite{MR3031643}, the action of every \(S^1\subset T\) lifts
 into the \(\text{Spin}^c\)-structure on \(M\times M'\) induced by the \(\text{Spin}\)-structure on \(M\) and the \(\text{Spin}^c\)-structure on \(M'\).

By Lemma 3.1 of \cite{MR3031643}, we have
\begin{equation*}
  \varphi^c(M\times M';\bigoplus_{i=1}^n p'^*L_i,0)=\varphi^c(M;0,0)\varphi^c(M';\bigoplus_{i=1}^n L_i,0).
\end{equation*}
By condition (\ref{dirac:dis:item:6}), we have
\begin{align*}
  \varphi^c(M';\bigoplus_{i=1}^n L_i,0)&=\langle Q_1(TM')\prod_{i=1}^n x_i Q_2'(\bigoplus_{i=1}^nL_i) \hat{A}(M'),[M']\rangle\\
&= \langle \prod_{i=1}^n x_i,[M']\rangle\neq 0.
\end{align*}
Hence, \(\varphi^c(M;0,0)\) vanishes if and only if \(\varphi^c(M\times M';\bigoplus_{i=1}^n p'^*L_i,0)\) vanishes.

By Theorem \ref{sec:twist-dirac-oper-9}, it is sufficient to show that there is a homomorphism \(\rho:S^1\hookrightarrow T\) such that \(\rho^*p_1^T(\bigoplus_{i=1}^n p'^*L_i - T(M\times M'))=a x^2\), where \(x\in H^2(BS^1;\mathbb{Z})\) is a generator and \(a\in \mathbb{Z}\), \(a<0\).
By Lemma~\ref{sec:torus-acti-stab}, there is a homomorphism \(\rho:S^1\rightarrow T\) such that
\begin{equation*}
 p_1^{S^1}(\bigoplus_{i=1}^n p'^*L_i - T(M\times M'))=\rho^*p_1^T(\bigoplus_{i=1}^n p'^*L_i - T(M\times M'))=a x^2 
\end{equation*}
  with \(a \in\mathbb{Z}\).

Moreover, we have
\begin{equation*}
  a x^2=p_1^{S^1}(\bigoplus_{i=1}^n p'^*L_i - T(M\times M'))|_y = \sum_{i=1}^n a_i^2 -\sum v_i^2,
\end{equation*}
where the \(a_i\in H^2(BS^1;\mathbb{Z})\), \(i=1,\dots,n\), are the weights of the \(S^1\)-representations \(p'^*L_i|_y\) and the \(v_i\in H^2(BS^1;\mathbb{Z})\) are the weights of the \(S^1\)-representation \(T_y(M\times M')\).
By our choice of the lifted actions the \(a_i\) vanish.
Not all of the \(v_i\) vanish because the \(T\)-action on \(M\)  is effective, which implies that the \(S^1\)-action on \(M\) is non-trivial.
Therefore the theorem is proved.
\end{proof}

Examples of manifolds \(M'\) to which the above theorem applies are manifolds whose tangent bundles split as Whitney sums of complex line bundles and which have non-zero Euler-characteristic.
In particular, if \(H\) is a semi-simple compact connected Lie-group
with maximal torus \(T'\) and \(\dim H > 0\), then \(M'=H/T'\) satisfies these assumptions.
We deal with this case in the following section.

\section{Torus actions and stabilizing with $G/T$}
\label{sec:torus-acti-stab-1}

In this section we deal with applications of Theorem
\ref{dirac:dis:sec:two-vanish-results} to the particular case where
\(M'\) is a homogeneous space \(H/T'\) with \(H\) a semi-simple
compact connected Lie group and \(T'\) a maximal torus of \(H\) and
\(\dim H>0\).

It has already been noted that the tangent bundle of \(H/T'\) splits
as a sum of complex line bundles.
Therefore \(H/T'\) satisfies all the assumptions on \(M'\) from
Theorem~\ref{dirac:dis:sec:two-vanish-results}.
Hence we immediately get the following corollary.

\begin{cor}
\label{dirac:dis:sec:vanish-result-witt-1}
  Let \(M\) be a \(\text{Spin}\)-manifold with \(p_1(M)=0\) and
  \(b_1(M)=0\) and \(H\) a semi-simple compact connected Lie group
  with maximal torus \(T'\) and \(\dim H > 0\).
  If there is an almost effective action of a torus \(T\) on \(M\times H/T'\) such that \(\rank T>\rank H + b_2(M)\),
  then the Witten-genus of \(M\) vanishes.
\end{cor}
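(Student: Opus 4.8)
The plan is to deduce the corollary immediately from Theorem~\ref{dirac:dis:sec:two-vanish-results} by taking \(M'=H/T'\); the only thing to verify is that the generalized flag manifold \(H/T'\) satisfies all the hypotheses imposed on \(M'\) there, and that the inequality \(\rank T>\rank H+b_2(M)\) is exactly the inequality \(\rank T>b_2(M\times H/T')\) required in that theorem.

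For the hypotheses on \(M'\) I would use the standard description of \(H/T'\). Fixing a system of positive roots \(\Delta^+\) of \(H\), the complexified tangent bundle of \(H/T'\) splits as \(\bigoplus_{\alpha\in\Delta^+}L_\alpha\), where \(L_\alpha=H\times_{T'}\C_\alpha\) has \(c_1(L_\alpha)=\alpha\in H^2(H/T';\mathbb{Z})\); so, writing \(\{x_1,\dots,x_n\}=\Delta^+\), one has \(2n=\dim H-\rank H\), which is positive because \(H\) is semisimple with \(\dim H>0\), hence \(n>0\). Equipping \(M'=H/T'\) with the \(\text{Spin}^c\)-structure coming from its invariant almost complex structure gives \(c_1^c(H/T')=c_1(T(H/T'))=\sum_i x_i\), and, since the Pontrjagin class of a Whitney sum of complex line bundles is the sum of the squares of their first Chern classes, \(p_1(H/T')=\sum_i x_i^2\); thus conditions (1) and (2) hold — in fact exactly, not merely modulo torsion. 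Condition (\ref{dirac:dis:item:6}) holds because \(e(T(H/T'))=\prod_i x_i\), so \(\langle\prod_i x_i,[H/T']\rangle=\chi(H/T')=|W(H)|\neq 0\). Finally \(b_1(H/T')=0\) since \(H/T'\) is simply connected, while \(p_1(M)=0\) is in particular torsion; together with \(b_1(M)=0\) from the hypotheses, all assumptions of Theorem~\ref{dirac:dis:sec:two-vanish-results} are met.

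It remains to identify the two rank conditions. The fundamental invariants of the semisimple group \(H\) all have cohomological degree at least \(4\), so in \(H^*(H/T';\Q)\cong\Q[t_1,\dots,t_{\rank H}]/I\) (with the \(t_i\) in degree \(2\) and \(I\) the ideal of positive-degree \(W(H)\)-invariants) the ideal \(I\) contains no nonzero class of degree \(2\); hence \(b_2(H/T')=\rank H\) and \(b_2(M\times H/T')=b_2(M)+\rank H\). Therefore \(\rank T>\rank H+b_2(M)\) is precisely \(\rank T>b_2(M\times H/T')\), and Theorem~\ref{dirac:dis:sec:two-vanish-results}, applied to the given almost effective \(T\)-action on \(M\times H/T'\), yields \(\varphi^c(M;0,0)=0\). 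There is no real obstacle in this argument; the one point to record carefully is the identification of the classes \(x_i\) with the positive roots of \(H\), since this single fact simultaneously supplies conditions (1), (2) and (\ref{dirac:dis:item:6}).
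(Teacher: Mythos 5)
Your proposal is correct and follows the same route as the paper, which simply notes that \(T(H/T')\) splits as a sum of complex line bundles (indexed by the positive roots) with nonzero Euler characteristic, so \(M'=H/T'\) satisfies the hypotheses of Theorem~\ref{dirac:dis:sec:two-vanish-results}, and that \(b_2(H/T')=\rank H\); you merely spell out these verifications in more detail. (One small wording slip: it is the tangent bundle with its invariant almost complex structure, not the complexified tangent bundle, that splits as \(\bigoplus_{\alpha\in\Delta^+}L_\alpha\); the complexification splits over all roots.)
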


The degree of symmetry \(N(M)\) of a manifold \(M\) is the maximum of the dimensions of compact connected Lie groups \(G\) which act smoothly and almost effectively on \(M\).
By combining the above corollary with Corollary~4.2 of \cite{MR3031643} we get the following bounds for the degree of symmetry of the manifolds \(M\times H/T'\).
To state our result we have to introduce some notation.
For \(l\geq 1\) let
\begin{equation*}
  \alpha_l=\max\left\{\frac{\dim G}{\rank G}; \; G \text{ a simple compact Lie-group with } \rank G \leq l\right\}.
\end{equation*}
The values of the \(\alpha_l\)'s are listed in Table \ref{tab:erste}.

\begin{table}
  \centering
  \begin{tabular}{|c|c|c|}
    \(l\)&\(\alpha_l\)& \(G_l\)\\\hline\hline
    \(1\)& \(3\)&\(\text{Spin}(3)\)\\\hline
    \(2\)& \(7\)&\(G_2\)\\\hline
    \(3\)& \(7\)&\(\text{Spin}(7), Sp(3)\)\\\hline
    \(4\)& \(13\)&\(F_4\)\\\hline
    \(5\)& \(13\)& none\\\hline
    \(6\)& \(13\)& \(E_6, \text{Spin}(13), Sp(6)\)\\\hline
    \(7\)& \(19\)& \(E_7\)\\\hline
    \(8\)& \(31\)& \(E_8\)\\\hline
    \(9\leq l\leq 14\)& \(31\)& none\\\hline
    \(l\geq 15\)&\(2l+1\)&\(\text{Spin}(2l+1), Sp(l)\)\\
  \end{tabular}
\caption{The values of \(\alpha_l\) and the simply connected compact simple Lie-groups \(G_l\) of rank \(l\) with \(\dim G_l= \alpha_l\cdot l\).}
\label{tab:erste}
\end{table}

\begin{cor}
\label{dirac:dis:sec:two-vanish-results-2}
  Let \(M\) be a \(\text{Spin}\)-manifold with \(p_1(M)=0\) and \(b_1(M)=0\), such that the Witten-genus of \(M\) does not vanish and \(H_1,\dots, H_k\) simple compact connected Lie groups with maximal tori \(T_1,\dots,T_k\).
  Then we have
  \begin{equation*}
    \sum_{i=1}^k \dim H_i \leq N(M\times \prod_{i=1}^k H_i/T_i) \leq \alpha_l\sum_{i=1}^k\rank H_i+b_2(M),
  \end{equation*}
where \(l=\max\{\rank H_i;\; i=1,\dots,k\}\) and \(\alpha_l\) is defined as above.
\end{cor}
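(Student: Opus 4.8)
The plan is to establish the two inequalities separately. The left-hand one is elementary: the group $H:=\prod_{i=1}^kH_i$ acts on $\prod_{i=1}^kH_i/T_i$ by left translation in each coordinate, and trivially on $M$, so it acts smoothly on $M\times\prod_iH_i/T_i$ with ineffective kernel $\prod_i(\bigcap_{h\in H_i}hT_ih^{-1})=\prod_iZ(H_i)$, which is finite since each $H_i$ is simple. Hence this action is almost effective and $N(M\times\prod_iH_i/T_i)\ge\dim H=\sum_{i=1}^k\dim H_i$.

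For the right-hand inequality, let $G$ be any compact connected Lie group acting smoothly and almost effectively on $\tilde M:=M\times\prod_iH_i/T_i$. First I would write $G=G_{ss}\cdot Z$ as an almost direct product of its semisimple part $G_{ss}$, a product of simple factors $G_1,\dots,G_r$, and the identity component $Z$ of its centre, which is a torus. Since $Z$ is a torus, $\dim G-\rank G=\dim G_{ss}-\rank G_{ss}=\sum_{j=1}^r(\dim G_j-\rank G_j)$. Note that both $G$ and $G_{ss}$ act almost effectively on $\tilde M=M\times H/T'$, where $H=\prod_iH_i$ is a positive-dimensional semisimple compact connected Lie group with maximal torus $T'=\prod_iT_i$ of rank $\sum_i\rank H_i$.

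Then I would feed in the two structural inputs. On the one hand, a maximal torus $T$ of $G$ acts almost effectively on $\tilde M=M\times H/T'$, and since the Witten genus of $M$ does not vanish, Corollary~\ref{dirac:dis:sec:vanish-result-witt-1} forces $\rank G=\rank T\le\sum_i\rank H_i+b_2(M)$. On the other hand, applying Corollary~4.2 of \cite{MR3031643} to the almost effective action of the semisimple group $G_{ss}$ on $\tilde M$ (again using that the Witten genus of $M$ is nonzero) gives $\rank G_{ss}\le\sum_i\rank H_i$ and $\rank G_j\le l=\max_i\rank H_i$ for every simple factor $G_j$. Since $m\mapsto\alpha_m$ is non-decreasing and $\dim G_j\le\alpha_{\rank G_j}\cdot\rank G_j$ for every simple compact Lie group (by the very definition of $\alpha_m$), each term satisfies $\dim G_j-\rank G_j\le(\alpha_l-1)\rank G_j$, whence $\dim G_{ss}-\rank G_{ss}\le(\alpha_l-1)\rank G_{ss}\le(\alpha_l-1)\sum_i\rank H_i$.

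Combining these estimates gives
\[
\dim G=(\dim G-\rank G)+\rank G\le(\alpha_l-1)\sum_i\rank H_i+\left(\sum_i\rank H_i+b_2(M)\right)=\alpha_l\sum_i\rank H_i+b_2(M),
\]
which is the claimed bound. I expect the only delicate point to be the bookkeeping between the semisimple part and the central torus of $G$: one must apply Corollary~\ref{dirac:dis:sec:vanish-result-witt-1} to the \emph{full} maximal torus of $G$ (this is exactly what produces the summand $b_2(M)$), apply the rank restriction on simple factors from \cite{MR3031643} to $G_{ss}$ only (so as not to over-count by $\sum_i\rank H_i$), and use the monotonicity of $\alpha_l$ to pass from $\alpha_{\rank G_j}$ to $\alpha_l$. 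All of the analytic work is already contained in the two cited results, so no new estimate is needed.
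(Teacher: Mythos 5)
Your proposal is correct and follows essentially the same route as the paper: the same decomposition of $G$ into its semisimple part and central torus, the same two inputs (Corollary~\ref{dirac:dis:sec:vanish-result-witt-1} bounding $\rank G$ and Corollary~4.2 of \cite{MR3031643} bounding the semisimple rank), and the same arithmetic $\dim G=\dim G_{ss}+\rank G-\rank G_{ss}\leq(\alpha_l-1)\rank G_{ss}+\sum_i\rank H_i+b_2(M)$. The only cosmetic difference is that you rederive the bound $\dim G_{ss}\leq\alpha_l\rank G_{ss}$ from the per-factor rank bounds and the monotonicity of $\alpha_l$, where the paper simply cites the proof of Corollary~4.6 of \cite{MR3031643}, and you spell out the (indeed trivial) left-hand inequality.
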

\begin{proof}
  Let \(G\) be a compact connected Lie group which acts almost effectively on \(M\times \prod_{i=1}^k H_i/T_i\).
  We may assume that \(G=G_{ss} \times Z\) with a semi-simple Lie group \(G_{ss}\) and a torus \(Z\).

By Corollary \ref{dirac:dis:sec:vanish-result-witt-1}, \(\rank G\) is bounded from above by \(\sum_{i=1}^k \rank H_i + b_2(M)\).
By Corollary 4.2 of \cite{MR3031643}, \(\rank G_{ss}\) is bounded from above by \(\sum_{i=1}^k \rank H_i\).
Moreover, by the proof of Corollary~4.6 of \cite{MR3031643} the dimension of \(G_{ss}\) is bounded from above by \(\alpha_l \rank G_{ss}\).
Since \(\alpha_l>1\), it follows that
\begin{align*}
  \dim G&=\dim G_{ss}+\dim Z = \dim G_{ss} +\rank G-\rank G_{ss}\\
  &\leq (\alpha_l-1)\rank G_{ss} + \sum_{i=1}^k \rank H_i + b_2(M)\\
  &\leq \alpha_l\sum_{i=1}^k\rank H_i+b_2(M). 
\end{align*}
This proves the second inequality. The first inequality is trivial.
\end{proof}

Note that if in the situation of Corollary~\ref{dirac:dis:sec:two-vanish-results-2} the groups \(H_i\) are all equal to one of the  groups listed in table~\ref{tab:erste} and are all isomorphic and \(b_2(M)=0\), then the left and right hand sides of the inequality in Corollary~\ref{dirac:dis:sec:two-vanish-results-2} are equal. Therefore in this case the degree of symmetry of \(M\times \prod_{i=1}^k H_i/T\) is equal to \(\dim \prod_{i=1}^kH_i\).
This leads to the following corollary.

\begin{cor}
  Let \(G\) be \(\text{Spin}(2l+1)\), \(Sp(l)\) with \(l\geq 15\) or an exceptional simple compact connected Lie-group with maximal torus \(T\).
 Moreover, let \(M\) be a two-connected manifold with \(p_1(M)=0\) and non-zero Witten genus. 
Then we have
\begin{equation*}
  N(M\times \prod_{i=1}^k G/T)= k \dim G.
\end{equation*}
\end{cor}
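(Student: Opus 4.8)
The plan is to specialize Corollary~\ref{dirac:dis:sec:two-vanish-results-2} to the case $H_1=\dots=H_k=G$ and to observe that, for the groups $G$ allowed in the statement, the upper and lower bounds provided by that corollary coincide; this is essentially the content of the remark preceding Corollary~\ref{dirac:dis:sec:two-vanish-results-2}.

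First I would check that $M$ fits the hypotheses of Corollary~\ref{dirac:dis:sec:two-vanish-results-2}. Since $M$ is two-connected, the Hurewicz and universal coefficient theorems give $H^2(M;\mathbb{Z})=0$; in particular $w_2(M)=0$, so $M$ is a Spin-manifold, and $b_1(M)=b_2(M)=0$. Combined with the hypotheses $p_1(M)=0$ and that the Witten genus of $M$ does not vanish, this is exactly the situation of Corollary~\ref{dirac:dis:sec:two-vanish-results-2}. Applying it with all simple factors equal to $G$ and writing $l=\rank G$, and using $b_2(M)=0$, we obtain
\begin{equation*}
  k\dim G=\sum_{i=1}^k\dim G\leq N\!\left(M\times\prod_{i=1}^k G/T\right)\leq \alpha_l\sum_{i=1}^k\rank G+b_2(M)=k\,\alpha_l\, l .
\end{equation*}

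It then remains to verify that $\dim G=\alpha_l\, l$ for each admissible $G$, i.e.\ that $G$ realizes the extremal ratio $\dim G/\rank G=\alpha_{\rank G}$ of Table~\ref{tab:erste}. For $G=\mathrm{Spin}(2l+1)$ or $G=Sp(l)$ with $l\geq 15$ one has $\dim G=l(2l+1)$ and $\alpha_l=2l+1$, so $\dim G=\alpha_l\, l$. For the exceptional groups, Table~\ref{tab:erste} shows that $G_2,F_4,E_6,E_7,E_8$ are precisely the groups $G_l$ listed there, with $(\rank,\dim)$ equal to $(2,14),(4,52),(6,78),(7,133),(8,248)$ respectively, so again $\dim G=\alpha_l\, l$. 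Hence in every case the left- and right-hand sides of the displayed inequality both equal $k\dim G$, which forces $N(M\times\prod_{i=1}^k G/T)=k\dim G$.

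I do not expect any genuine obstacle here: the statement is a direct corollary of Corollary~\ref{dirac:dis:sec:two-vanish-results-2}, and the only point requiring care is the elementary bookkeeping with Table~\ref{tab:erste} that identifies each admissible $G$ as a group attaining the maximal value $\alpha_{\rank G}$, so that the two bounds collapse to a single value.
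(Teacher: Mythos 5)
Your proposal is correct and follows exactly the route the paper intends: the paper proves this corollary via the remark immediately preceding it, which observes that when all factors equal one of the extremal groups $G_l$ from Table~\ref{tab:erste} and $b_2(M)=0$, the two bounds in Corollary~\ref{dirac:dis:sec:two-vanish-results-2} coincide. Your verification that two-connectedness gives the Spin condition and $b_1(M)=b_2(M)=0$, and your bookkeeping confirming $\dim G=\alpha_{\rank G}\cdot\rank G$ for each admissible $G$, supply precisely the details the paper leaves implicit.
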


\section{An application to torus manifolds}
\label{sec:appl-torus-manif-1}

In this section we prove the following theorem.

\begin{theorem}
\label{sec:appl-torus-manif}
  Up to homeomorphism (diffeomorphism, respectively) there are only finitely many simply connected torus manifolds \(M\) (quasitoric manifolds, respectively) with \(H^*(M;\mathbb{Z})\cong H^*(\#_{i=1}^k \pm\mathbb{C} P^n;\mathbb{Z})\) with \(k<n\).
\end{theorem}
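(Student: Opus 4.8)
The plan is to reduce the finiteness statement to two inputs: a bound on the torus rank forced by the cohomology ring, together with a rigidity result classifying the torus manifolds that can realize such a ring. First I would analyze the cohomology ring $A := H^*(\#_{i=1}^k \pm\C P^n;\Z)$. Since $M$ is simply connected, $b_1(M)=0$, and from the structure of $A$ we read off $b_2(M)=k$ and $\dim M = 2n$. The key numerical observation is that a $2n$-dimensional torus manifold has an action of an $n$-dimensional torus $T$, so $\rank T = n > k = b_2(M)$. This places us exactly in the regime of Lemma~\ref{sec:torus-acti-stab} and of the vanishing machinery of Section~\ref{sec:G/T}: in particular, for any class $a \in H^4_T(M;\Q)$ that restricts trivially to $H^4(M;\Q)$, there is a nontrivial subcircle $\rho\colon S^1\hookrightarrow T$ with $\rho^* a \in \pi_{S^1}^*H^4(BS^1;\Q)$.

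Next I would use this to control the $S^1$-fixed-point data for a suitable circle subgroup, following the Dessai-type arguments cited in the introduction (\cite{MR1722036}, \cite{MR1731460}). The idea is that since $b_2(M)=k<n$, the Pontrjagin class $p_1(M)$ — or more precisely the relevant equivariant twisting class built from $TM$ and the line bundles whose Chern classes generate $H^2(M)$ — satisfies a relation that, after restricting to an appropriate circle, becomes divisible by a square $x^2 \in H^2(BS^1)$ with a controlled integer coefficient. Rigidity of the associated (generalized) elliptic genus then forces the weights at the fixed points into a bounded set; concretely, one expects to bound the characteristic numbers of $M$, and in particular the first Pontrjagin class and the $T$-weights at fixed points, purely in terms of $n$ and $k$. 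Combined with the author's recent rigidity theorem \cite{wiemeler15:_equiv} for torus manifolds with cohomology close to that of $\C P^n$ (which should assert that such a torus manifold is equivariantly diffeomorphic — or homeomorphic — to a standard model, possibly a connected sum of standard weighted projective-type pieces or of $S^{2n}$-bundles), this pins $M$ down to finitely many diffeomorphism (resp.\ homeomorphic) types. In the quasitoric case one additionally invokes that a quasitoric manifold is determined by its characteristic pair $(P,\lambda)$, so bounding the combinatorial data of the polytope $P$ and the characteristic function $\lambda$ — both of which are constrained by the fixed rank $n$ and the cohomology ring $A$ with $k<n$ — yields finiteness up to diffeomorphism.

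The main obstacle I expect is the gluing of the two halves of the argument: translating the analytic rigidity/vanishing conclusions of Section~\ref{sec:G/T} (which are statements about indices of twisted Dirac operators vanishing) into an honest \emph{finiteness} statement about the manifold, rather than merely a vanishing of some genus. This requires the sharper local form — bounding the $S^1$-representation $T_yM$ at each fixed point $y$, not just deducing that some weight is nonzero — and then feeding those bounds into \cite{wiemeler15:_equiv} to get a finite list of models. A secondary technical point is handling the hypothesis $k<n$ carefully: it is precisely what guarantees $\rank T > b_2(M)$ and hence that Lemma~\ref{sec:torus-acti-stab} applies, so the proof should make explicit where $k<n$ (as opposed to $k\le n$) is used, namely in producing the nontrivial subcircle with the prescribed behavior on $H^4_T$. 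I would also need to verify that the cohomology ring $A$ indeed satisfies the Poincaré-duality and generation hypotheses required by the rigidity theorem, which is a routine but necessary check on the ring structure of $H^*(\#_{i=1}^k \pm\C P^n;\Z)$.
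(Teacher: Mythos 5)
Your overall architecture matches the paper's: first bound $p_1(M)$ in terms of $n$ using the index-theoretic vanishing machinery, then show that for fixed $p_1(M)$ there are only finitely many homeomorphism/diffeomorphism types via the rigidity theorem of \cite{wiemeler15:_equiv}. These are exactly the paper's Lemmas~\ref{sec:an-aplication-torus} and \ref{sec:an-aplication-torus-1}. But in both halves you leave the decisive step as an acknowledged obstacle, and the routes you sketch are not the ones that work. For the bound on $p_1$, you propose to ``bound the $T$-weights at fixed points''; the paper never does this, and it is unclear how a weight bound would follow from the vanishing theorems. Instead the paper writes $p_1(M)=\sum_i\beta_i v_i^2$ and argues by contradiction: if some $\beta_{i_0}>n+1$, one builds explicit sums of line bundles $V$ and $W$ (with $c_1(V)=c_1^c(M)$ for a suitably chosen $\text{Spin}^c$-structure, $W$ spin, and the excess $\beta_{i_0}-n-3\geq 0$ absorbed as extra copies of $L(v_{i_0})$ in $W$) so that $p_1(V\oplus W\ominus TM)=0$. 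Then Lemma~\ref{sec:torus-acti-stab} (using $\rank T=n>k=b_2(M)$, which is where $k<n$ enters) together with Theorem~\ref{sec:twist-dirac-oper-9} forces $\varphi^c(M;V,W)=0$, while a direct computation gives $\varphi^c(M;V,W)=\langle e(V),[M]\rangle=\pm2\neq 0$. This bundle construction is the key missing idea; nothing in your sketch actually produces a bound on $p_1(M)$.

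For the second half, your claim that the characteristic pair $(P,\lambda)$ is ``constrained by the fixed rank $n$ and the cohomology ring'' is not sufficient: the characteristic function takes values in an infinite lattice, and the cohomology ring alone does not confine it to a finite set. The paper gets finiteness from the localization identity $p_1(M)=\sum_{i=1}^m u_i^2$, where the $u_i$ are the Poincar\'e duals of the characteristic submanifolds: writing $u_i=\sum_j\alpha_{ij}v_j$, this reads $p_1(M)=\sum_j\left(\sum_i\alpha_{ij}^2\right)v_j^2$, so a fixed $p_1(M)$ bounds all the $\alpha_{ij}$ (and the same identity gives the positivity $\beta_j>0$ used in Lemma~\ref{sec:an-aplication-torus}). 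Only after this reduction does the rigidity theorem of \cite{wiemeler15:_equiv} (together with the cited results on quasitoric manifolds) convert ``finitely many possible $u_i$'' into ``finitely many homeomorphism or diffeomorphism types.'' Without the localization identity, the passage from a bounded $p_1$ to finitely many characteristic data does not go through.
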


Note that if \(\dim M< 6\) then this theorem follows directly from the classification of simply connected torus manifolds of dimension four given by Orlik and Raymond \cite{MR0268911} and the fact that the sphere is the only two-dimensional torus manifold.

In higher dimensions the proof of the theorem is subdivided into two lemmas.
The first one is:

\begin{lemma}
\label{sec:an-aplication-torus-1}
  Let \(M\) be a simply connected torus manifold (a quasitoric manifold, respectively), with \(H^*(M;\mathbb{Z})\cong H^*(\#_{i=1}^k \pm\mathbb{C} P^n;\mathbb{Z})\), \(k\in\mathbb{N}\), \(n\geq 3\).
  Then up to finite ambiguity the homeomorphism type (diffeomorphism type, respectively) is determined by the first Pontrjagin class of \(M\).
\end{lemma}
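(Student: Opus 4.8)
The plan is to reduce the classification, up to finite ambiguity, to a small amount of characteristic-class data, and then to invoke a rigidity result for torus manifolds of this cohomological type. First I would record the cohomology ring: since $H^*(M;\mathbb{Z})\cong H^*(\#_{i=1}^k\pm\mathbb{C}P^n;\mathbb{Z})$, the ring is generated by classes $u_1,\dots,u_k\in H^2(M;\mathbb{Z})$ subject to the relations $u_iu_j=0$ for $i\neq j$ and $u_i^{n+1}=0$, with a single generator of $H^{2n}(M;\mathbb{Z})$, so that $b_2(M)=k$ and $M$ has dimension $2n$. Because $M$ is a simply connected torus manifold (respectively a quasitoric manifold), by the structure theory of Masuda--Panov and Davis--Januszkiewicz its integral cohomology is concentrated in even degrees and is torsion-free, so all Pontrjagin classes live in this explicit ring; in particular $p_1(M)=\sum_{i=1}^k \lambda_i u_i^2$ for integers $\lambda_i$, and higher Pontrjagin classes $p_j(M)\in H^{4j}(M;\mathbb{Z})$ are likewise determined by integer coefficients on the (finitely many) monomials in the $u_i$ of the right degree. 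The total Pontrjagin class therefore lies in a finitely generated free abelian group, but a priori infinitely many values could occur.

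The key point is that the hypothesis $k<n$ forces a torus of rank $>b_2(M)$ to act, so the vanishing machinery of Section~\ref{sec:G/T} applies and pins down the higher characteristic numbers. Concretely, $M$ carries an effective action of a torus of rank $n$ (it is a torus manifold of dimension $2n$), and $n>k=b_2(M)$. I would combine this with the integrality of the various twisted indices $\varphi^c(M;V,W)$ and with Theorem~\ref{sec:twist-dirac-oper-9}, choosing the twisting bundles $V,W$ built from the line bundles $L_i$ with $c_1(L_i)=u_i$, to obtain a system of linear Diophantine conditions (and positivity/vanishing constraints) on the coefficients $\lambda_i$ and on the higher Pontrjagin numbers. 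The upshot I expect is that only finitely many values of the full Pontrjagin class $p(M)\in H^*(M;\mathbb{Z})$ are compatible with the existence of such a torus action; equivalently, once $p_1(M)$ is fixed, the remaining Pontrjagin classes are confined to a finite set. This is exactly the ``up to finite ambiguity determined by $p_1(M)$'' in the statement: we must still allow $p_1(M)$ itself to vary over a finite set, but the reduction shows the stable tangential data is finite.

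Finally, with the cohomology ring fixed and the characteristic classes confined to a finite set, I would apply the equivariant rigidity results for torus manifolds with cohomology a connected sum of projective spaces --- Dessai's work \cite{MR1722036}, \cite{MR1731460} together with the author's rigidity theorem \cite{wiemeler15:_equiv} --- which say that such a torus manifold is classified up to (equivariant, hence in particular non-equivariant) homeomorphism by its cohomology ring together with its characteristic classes, and in the quasitoric case up to diffeomorphism once the Pontrjagin classes are known (the smooth structure on a manifold of this homotopy type being determined by finitely much extra data, via surgery theory, since $n\geq 3$ keeps us in the range where the surgery exact sequence is effective and the relevant structure sets are finite after fixing the normal invariant). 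Hence fixing $p_1(M)$ leaves only finitely many homeomorphism (respectively diffeomorphism) types, which is the assertion. The main obstacle I anticipate is the middle step: extracting, from the rank-$n$ torus action with $n>k$, enough index-theoretic inequalities to bound \emph{all} the higher Pontrjagin numbers and not merely to force a single vanishing --- this requires a careful choice of the twisting data $V$, $W$ and of the subcircle $\rho:S^1\hookrightarrow T$ supplied by Lemma~\ref{sec:torus-acti-stab}, and then a finiteness argument for the resulting lattice of admissible characteristic classes.
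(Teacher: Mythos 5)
Your proposal misses the two ideas that actually drive the paper's proof, and the machinery you substitute does not fill the gap. First, the rigidity input is not ``cohomology ring plus characteristic classes determine the homeomorphism type''; no such classification is available for these manifolds. What the paper uses (Theorem 1.1 of \cite{wiemeler15:_equiv}, Theorem 2.2 of \cite{MR2885534}, Theorem 3.6 of \cite{wiemeler15:_torus}) is that the homeomorphism type (diffeomorphism type in the quasitoric case) is determined up to finite ambiguity by the Poincar\'e duals $u_1,\dots,u_m\in H^2(M;\mathbb{Z})$ of the \emph{characteristic submanifolds}, i.e.\ by the equivariant combinatorial data. So the problem reduces to showing that $p_1(M)$ confines the $u_i$ to a finite set. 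Second, the mechanism for that is not index theory at all but the localization identity $p_1(M)=\sum_{i=1}^m u_i^2$ (valid since $M$ is equivariantly formal). Writing $u_i=\sum_j\alpha_{ij}v_j$ in the basis where $H^*\cong\mathbb{Z}[v_1,\dots,v_k]/(v_iv_j,\,v_i^n\pm v_j^n)$, this gives $p_1(M)=\sum_j\bigl(\sum_i\alpha_{ij}^2\bigr)v_j^2$, and since the $v_j^2$ are a basis of $H^4$, fixing $p_1(M)$ fixes each sum of squares $\sum_i\alpha_{ij}^2$, hence bounds all $\alpha_{ij}$ and leaves only finitely many choices for the $u_i$. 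That sum-of-squares observation is the whole point, and it is absent from your write-up.

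Two further concrete problems. You invoke the hypothesis $k<n$ to get a torus of rank exceeding $b_2(M)$, but this lemma does not assume $k<n$; that inequality enters only in the companion Lemma~\ref{sec:an-aplication-torus}, where the index-theoretic machinery of Sections~\ref{sec:preliminaries} and \ref{sec:G/T} is used to bound $p_1(M)$ itself (showing $0<\beta_i\le n+1$), not to control higher Pontrjagin classes once $p_1$ is fixed. And your proposed middle step --- extracting from twisted indices enough Diophantine constraints to bound \emph{all} higher Pontrjagin numbers, then appealing to surgery theory for finiteness of structure sets --- is left entirely speculative; you yourself flag it as the main obstacle, and it is not how the argument closes. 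As written, the proposal has a genuine gap.
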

\begin{proof}
  By Theorem 1.1 of \cite{wiemeler15:_equiv}, Theorem 2.2 of \cite{MR2885534}   and Theorem 3.6 of \cite{wiemeler15:_torus}, it is sufficient to prove that the Poincar\'e duals of the characteristic submanifolds of \(M\) are determined up to finite ambiguity by \(p_1(M)\).
  The characteristic submanifolds of \(M\) are codimension two submanifolds which are fixed by circle subgroups of the torus which acts on \(M\).
  Let \(u_1,\dots,u_m\in H^2(\#_{i=1}^k \pm\mathbb{C} P^n;\mathbb{Z})\) be their Poincar\'e duals.
  Moreover, we have
  \begin{equation*}
    H^*:=H^*(\#_{i=1}^k \pm\mathbb{C} P^n;\mathbb{Z})=\mathbb{Z}[v_1,\dots,v_k]/(v_iv_j, v_i^n\pm v_j^n;\; 1\leq i<j\leq k)
  \end{equation*}
  with \(\deg v_i=2\) for \(i=1,\dots,k\).

  Therefore there are \(\alpha_{ij}\in\mathbb{Z}\) such that \(u_i=\sum_{j=1}^k \alpha_{ij} v_j\).
  
Since \(M\) is equivariantly formal, it follows from localization in equivariant cohomology that
\begin{equation*}
  p_1(M)=\sum_{i=1}^m u_i^2=\sum_{j=1}^k\left(\sum_{i=1}^m \alpha_{ij}^2\right) v_j^2.
\end{equation*}
Because the \(v_j^2\) form a basis of \(H^4\) it follows that for fixed \(p_1(M)\) there are only finitely many possibilities for the \(\alpha_{ij}\). Therefore the \(u_i\) are contained  in a finite set which only depends on \(p_1(M)\).
  
This proves the lemma.  
\end{proof}

The second lemma is as follows:

\begin{lemma}
\label{sec:an-aplication-torus}
  Let \(M\) be a torus manifold, with \(H^*(M;\mathbb{Z})\cong H^*(\#_{i=1}^k \pm\mathbb{C} P^n;\mathbb{Z})\), \(k<n\), \(n\geq 3\).

  Then with the notation from the proof of the previous lemma we have
  \begin{equation*}
    p_1(M)=\sum_{i=1}^k\beta_iv_i^2
  \end{equation*}
with \(0< \beta_i\leq n+1\).
\end{lemma}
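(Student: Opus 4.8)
The plan is to work with the data already produced in the proof of Lemma~\ref{sec:an-aplication-torus-1}: the characteristic submanifolds $M_1,\dots,M_m$ of $M$, their Poincar\'e duals $u_i=\sum_{j=1}^k\alpha_{ij}v_j\in H^2(M;\mathbb{Z})$, and the identity
\begin{equation*}
  p_1(M)=\sum_{i=1}^m u_i^2=\sum_{j=1}^k\beta_j v_j^2,\qquad \beta_j=\sum_{i=1}^m\alpha_{ij}^2\ \ge\ 0,
\end{equation*}
which comes from equivariant formality and localization. It therefore remains to prove $\beta_j\neq 0$ and $\beta_j\le n+1$. The first point is immediate: since $H^{\mathrm{odd}}(M;\mathbb{Z})=0$, the ring $H^*(M;\mathbb{Z})$ is generated by the degree-two classes $u_1,\dots,u_m$, so these span $H^2(M;\mathbb{Z})=\mathbb{Z}\langle v_1,\dots,v_k\rangle$; hence for every $j$ some $\alpha_{ij}$ is non-zero and $\beta_j\ge 1$.

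For the upper bound the plan is to combine the ring structure with the combinatorics of the $M_i$. First, $u_au_b=0$ whenever $M_a\cap M_b=\emptyset$, together with $v_jv_l=0$ for $j\neq l$, forces $\alpha_{aj}\alpha_{bj}=0$ in that case; so, setting $F_j=\{\,i:\alpha_{ij}\neq0\,\}$, the family $\{M_i:i\in F_j\}$ consists of pairwise intersecting submanifolds. Second, at a fixed point $p$, with $\sigma(p)$ the set of the $n$ characteristic submanifolds through $p$, transversality of their intersection gives $\langle\prod_{i\in\sigma(p)}u_i,[M]\rangle=\pm1$, and expanding in the $v$-basis (again using $v_jv_l=0$) this reads $\sum_{j\in J(p)}(\pm1)\prod_{i\in\sigma(p)}\alpha_{ij}=\pm1$ with $J(p)=\bigcap_{i\in\sigma(p)}\operatorname{supp}(u_i)$. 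The essential input --- and the one place where the hypothesis $k<n$ enters --- is that the simplicial cell structure carried by the characteristic submanifolds of $M$ is severely constrained: its $h$-vector equals $(1,k,\dots,k,1)$, so its $g$-vector equals $(1,k-1,0,\dots,0)$, and a generalized-lower-bound/rigidity argument in the spirit of \cite{wiemeler15:_equiv}, \cite{MR2885534} and \cite{wiemeler15:_torus} should force this structure to be a stacked $(n-1)$-sphere, i.e.\ an iterated connected sum of $k$ copies of $\partial\Delta^{n}$ --- in particular an honest simplicial complex. Granting this, $|J(p)|=1$ for every $p$; writing $\{j(p)\}=J(p)$, the displayed identity yields $\prod_{i\in\sigma(p)}\alpha_{i,j(p)}=\pm1$, which forces $\alpha_{i,j(p)}=\pm1$ for all $i\in\sigma(p)$. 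Letting $p$ run through the fixed points lying on a given $M_i$ then pins down every non-zero $\alpha_{ij}$ to $\pm1$, so $\beta_j=|F_j|$; since a set of pairwise-adjacent vertices in a stacked $(n-1)$-sphere has at most $n+1$ elements, this gives $\beta_j=|F_j|\le n+1$.

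The hard part will be the middle step --- showing that a torus manifold with $H^*(M;\mathbb{Z})\cong H^*(\#_{i=1}^k\pm\mathbb{C}P^n;\mathbb{Z})$ and $k<n$ has stacked-sphere combinatorics, the manifold analogue of the fact that $g_2=0$ implies ``stacked'', including that the simplicial poset of characteristic submanifolds is an honest simplicial complex. Once that structural statement is available the rest is bookkeeping with the relations $v_iv_j=0$, $v_i^n=\pm v_j^n$ and the fixed-point products $\prod_{i\in\sigma(p)}u_i=\pm[M]^{*}$; and for the conclusion one only needs the weaker consequences $|J(p)|=1$ and $|F_j|\le n+1$.
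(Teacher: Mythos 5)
Your route is entirely different from the paper's and, as you yourself flag, it hinges on an unproven structural claim that is at least as deep as the lemma itself. The assertion that the characteristic-submanifold combinatorics of \(M\) is a stacked \((n-1)\)-sphere (an honest simplicial complex, with \(|J(p)|=1\) at every fixed point and every non-zero \(\alpha_{ij}=\pm1\)) cannot simply be ``granted'': it is a generalized-lower-bound/rigidity statement that the cited references do not supply in this form, and the subsequent deductions do not all follow from it anyway --- for instance \(\sum_{j\in J(p)}\pm\prod_{i\in\sigma(p)}\alpha_{ij}=\pm1\) does not force \(|J(p)|=1\), since distinct non-zero terms can cancel. A further warning sign is that the hypothesis \(k<n\) plays no visible role in your argument: the \(h\)-vector is \((1,k,\dots,k,1)\) and \(g_2=0\) whether or not \(k<n\), so your argument, if it worked, would prove the bound without that hypothesis, whereas the paper only claims (and only needs to claim) it for \(k<n\).

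The paper's actual proof is index-theoretic and avoids all of this. Assuming \(\beta_{i_0}>n+1\) for some \(i_0\), it chooses a \(\text{Spin}^c\)-structure with \(c_1^c(M)=(n+1)v_{i_0}+\sum_{i\neq i_0}\alpha_i v_i\) (or \(nv_{i_0}+\sum_{i\neq i_0}\alpha_i v_i\) in the other parity case) and builds explicit sums of line bundles \(V\) and \(W\) with \(c_1(V)=c_1^c(M)\), \(p_1(V\oplus W\ominus TM)=0\) and \(W\) spin; the assumption \(\beta_{i_0}>n+1\) is exactly what makes the multiplicities in \(W\) non-negative. Lemma~\ref{sec:torus-acti-stab} --- and this is precisely where \(k=b_2(M)<n=\rank T\) enters --- produces a circle \(\rho:S^1\rightarrow T\) with \(\rho^*p_1^T(V\oplus W\ominus TM)=ax^2\) and \(a<0\), so Theorem~\ref{sec:twist-dirac-oper-9} forces \(\varphi^c(M;V,W)=0\); this contradicts the direct computation \(\varphi^c(M;V,W)=\langle e(V),[M]\rangle=\pm2\). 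To salvage your combinatorial route you would have to actually prove the stackedness and unimodularity statements, which is a separate and substantial project; as it stands the proposal does not establish the upper bound \(\beta_i\leq n+1\).
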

\begin{proof}
  The inequality \(0<\beta_i\) follows from the formula for \(p_1(M)\) given in the proof of the previous lemma.
  Therefore we only have to show that for all \(i\), \(\beta_i\leq n+1\).

  Assume the contrary, i.e. \(\beta_{i_0}>n+1\) for some \(i_0\in \{1,\dots,k\}\).
  Since the natural map \(H^2(M;\mathbb{Z})\rightarrow H^2(M;\mathbb{Z}_2)\) is surjective, \(M\) is a Spin\(^c\)-manifold.
  Let \(\alpha_{i}\in \{0,1\}\), \(i=1,\dots,k\) such that \(w_2(M)\equiv\sum_{i=1}^k \alpha_iv_i\mod 2\).

  Then there are two cases \(\alpha_{i_0}\equiv n+1\mod 2\) and \(\alpha_{i_0}\equiv n\mod 2\).
  
  We first deal with the first case.
  Choose a Spin\(^c\)-structure on \(M\) such that \(c_1^c(M)= (n+1)v_{i_0}+\sum_{i\neq i_0} \alpha_i v_i\).
  Because \(b_1(M)=0\) every \(S^1\)-action on \(M\) lifts into this Spin$^c$-structure and into all line-bundles over \(M\).
  We can choose these lifts in such a way that the actions on the fiber of a line bundle over a given fixed point \(y \in M^{S^1}\) is trivial. 
  By the relation \(w_2(M)^2\equiv p_1(M)\mod 2\), we know that
    \(\beta_{i}\equiv \alpha_i^2\mod 2\).
    Therefore we have \(\beta_{i_0}\geq n+3\).
    Now for \(x\in H^2(M;\mathbb{Z})\) let \(L(x)\) be the line bundle over \(M\) with first Chern class \(x\).
    Moreover, let
    \begin{equation*}
      V=L(2v_{i_0})\oplus L(v_{i_0}+\sum_{i\neq i_0}\alpha_i v_i) \oplus (n-2)L(v_{i_0})
    \end{equation*}
    and
    \begin{equation*}
      W=\bigoplus_{i\neq i_0}(\beta_i-\alpha_i) L(v_i) \oplus (\beta_{i_0}-n-3) L(v_{i_0}).
    \end{equation*}
    Then we have \(c_1(V)=c_1^c(M)\), \(p_1(V\oplus W\ominus TM)=0\) and \(W\) is a spin bundle.

    Therefore as in the proof of Theorem \ref{dirac:dis:sec:two-vanish-results} it follows from Theorem~\ref{sec:twist-dirac-oper-9} and Lemma~\ref{sec:torus-acti-stab}, that \(\varphi^c(M;V,W)=0\) if \(k<n\).
    This gives a contradiction since a direct computation shows that
    \begin{equation*}
      \varphi^c(M;V,W)=\langle e(V),[M]\rangle=\pm 2\neq 0.
    \end{equation*}

    The case where \(\alpha_{i_0}\equiv n\mod 2\) is similar.
    In this case one has to choose a Spin$^c$-structure on \(M\) such that \(c_1^c(M)=nv_{i_0}+\sum_{i\neq i_0} \alpha_i v_i\).
    Moreover one has to consider the bundles
    \begin{equation*}
      V=L(v_{i_0}+\sum_{i\neq i_0}\alpha_i v_i) \oplus (n-1)L(v_{i_0})
    \end{equation*}
    and
    \begin{equation*}
      W=\bigoplus_{i\neq i_0}(\beta_i-\alpha_i) L(v_i) \oplus (\beta_{i_0}-n) L(v_{i_0}).
    \end{equation*}
    The details are left to the reader.
\end{proof}

Now Theorem~\ref{sec:appl-torus-manif} follows directly from Lemmas~\ref{sec:an-aplication-torus-1} and \ref{sec:an-aplication-torus}.

\bibliography{torus_witten}{}

\begin{thebibliography}{Wie15b}

\bibitem[ABS64]{0146.19001}
M.F. Atiyah, R.~Bott, and A.~Shapiro.
\newblock {Clifford modules.}
\newblock {\em Topology}, 3:3--38, 1964.

\bibitem[AS68]{MR0236952}
M.~F. Atiyah and I.~M. Singer.
\newblock The index of elliptic operators. {III}.
\newblock {\em Ann. of Math. (2)}, 87:546--604, 1968.

\bibitem[Des99]{MR1731460}
Anand Dessai.
\newblock {${\rm Spin}^c$}-manifolds with {${\rm Pin}(2)$}-action.
\newblock {\em Math. Ann.}, 315(4):511--528, 1999.

\bibitem[Des00]{MR1722036}
Anand Dessai.
\newblock Rigidity theorems for {${\rm Spin}^{c}$}-manifolds.
\newblock {\em Topology}, 39(2):239--258, 2000.

\bibitem[DJ91]{MR1104531}
Michael~W. Davis and Tadeusz Januszkiewicz.
\newblock Convex polytopes, {C}oxeter orbifolds and torus actions.
\newblock {\em Duke Math. J.}, 62(2):417--451, 1991.

\bibitem[DW15]{dessai15:_compl_s}
Anand Dessai and Michael Wiemeler.
\newblock {Complete intersections with $S^1$-action}.
\newblock to appear in Transformation Groups, arXiv:1108.5327, 2015.

\bibitem[Hat78]{0395.57020}
A.~Hattori.
\newblock {$\text{Spin}^c$-structures and $S^1$-actions.}
\newblock {\em Invent. Math.}, 48:7--31, 1978.

\bibitem[HM03]{MR1955796}
Akio Hattori and Mikiya Masuda.
\newblock Theory of multi-fans.
\newblock {\em Osaka J. Math.}, 40(1):1--68, 2003.

\bibitem[HY76]{MR0461538}
Akio Hattori and Tomoyoshi Yoshida.
\newblock Lifting compact group actions in fiber bundles.
\newblock {\em Japan. J. Math. (N.S.)}, 2(1):13--25, 1976.

\bibitem[Liu95]{MR1331972}
Kefeng Liu.
\newblock On modular invariance and rigidity theorems.
\newblock {\em J. Differential Geom.}, 41(2):343--396, 1995.

\bibitem[Mas99]{MR1689995}
Mikiya Masuda.
\newblock Unitary toric manifolds, multi-fans and equivariant index.
\newblock {\em Tohoku Math. J. (2)}, 51(2):237--265, 1999.

\bibitem[OR70]{MR0268911}
Peter Orlik and Frank Raymond.
\newblock Actions of the torus on {$4$}-manifolds. {I}.
\newblock {\em Trans. Amer. Math. Soc.}, 152:531--559, 1970.

\bibitem[Pet72]{0247.57010}
T.~Petrie.
\newblock {Smooth S$^1$ actions on homotopy complex projective spaces and
  related topics.}
\newblock {\em Bull. Am. Math. Soc.}, 78:105--153, 1972.

\bibitem[Wie12]{MR2885534}
Michael Wiemeler.
\newblock Remarks on the classification of quasitoric manifolds up to
  equivariant homeomorphism.
\newblock {\em Arch. Math. (Basel)}, 98(1):71--85, 2012.

\bibitem[Wie13]{MR3031643}
Michael Wiemeler.
\newblock Dirac operators and symmetries of quasitoric manifolds.
\newblock {\em Algebr. Geom. Topol.}, 13(1):277--312, 2013.

\bibitem[Wie15a]{wiemeler15:_equiv}
Michael Wiemeler.
\newblock Equivariantly homeomorphic quasitoric manifolds are diffeomorphic.
\newblock to appear in Bol. Soc. Mat. Mex., arXiv:1506.07999, 2015.

\bibitem[Wie15b]{wiemeler15:_torus}
Michael Wiemeler.
\newblock Torus manifolds and non-negative curvature.
\newblock {\em J. London Math. Soc.}, 91(3):667--692, 2015.

\bibitem[Wit88]{MR970288}
Edward Witten.
\newblock The index of the {D}irac operator in loop space.
\newblock In {\em Elliptic curves and modular forms in algebraic topology
  ({P}rinceton, {NJ}, 1986)}, volume 1326 of {\em Lecture Notes in Math.},
  pages 161--181. Springer, Berlin, 1988.

\end{thebibliography}
\bibliographystyle{alpha}
\end{document}